\documentclass[a4paper, 12pt, oneside]{article}
\usepackage[usenames, dvipsnames]{xcolor}
\pdfoutput=1 % For arXiv to use pdflatex, for package microtype
\usepackage[T1]{fontenc}
\usepackage[british]{babel}
\usepackage{lmodern}
\usepackage{etoolbox}
\usepackage{amsmath, amsthm, amssymb, bm, mleftright}
\usepackage{booktabs, longtable, caption, subcaption, multirow}
\usepackage[space]{grffile}
% If the page number is too low, use footskip=1cm
\usepackage[margin=2.5cm, footskip=1cm]{geometry}
\usepackage{enumitem}
\usepackage[protrusion=allmath]{microtype}
\usepackage{tikz}
\usetikzlibrary{cd, calc}
\usepackage{listings}
\usepackage[colorlinks, citecolor=blue, linkcolor=blue, linktoc=section]{hyperref}

\allowdisplaybreaks
\usepackage[capitalise, noabbrev]{cleveref}
\creflabelformat{enumi}{(#2#1#3)}
\creflabelformat{enumii}{(#2#1#3)}
\creflabelformat{enumiii}{(#2#1#3)}

\usepackage{titlesec}
\titleformat*{\section}{\large\bfseries}
\titleformat*{\subsection}{\normalsize\bfseries}
\newlength{\VerticalSpaceAfterParagraph}
\setlength
  {\VerticalSpaceAfterParagraph}
  {0.25\baselineskip plus 0.10\baselineskip minus 0.10\baselineskip}
\titlespacing*{\paragraph}{0pt}{\VerticalSpaceAfterParagraph}{1em}

\usepackage{titling}
\pretitle{\vspace{-\baselineskip}\begin{center}\Large\bfseries}
\posttitle{\end{center}\vspace{-0.25\baselineskip}}
\preauthor{\begin{center}}
\postauthor{\end{center}}
\predate{\begin{center}}
\postdate{\end{center}}

\setlist
  {
    topsep = 5.0pt plus 2.0pt minus 3.0pt,
    partopsep = 1.5pt plus 1.0pt minus 1.0pt,
    parsep = 2.5pt plus 1.25pt minus 0.5pt,
    itemsep = 0pt plus 1.25pt minus 0.5pt
  }

\theoremstyle{plain}
\newtheorem{theorem}{Theorem}
\newtheorem*{theorem*}{Theorem}
\newtheorem{proposition}[theorem]{Proposition}
\newtheorem{lemma}[theorem]{Lemma}

\theoremstyle{definition}
\newtheorem{definition}[theorem]{Definition}
\newtheorem{example}[theorem]{Example}

\theoremstyle{remark}
\newtheorem{remark}[theorem]{Remark}

\numberwithin{theorem}{section}
\numberwithin{table}{section}
\numberwithin{equation}{theorem}
%%% DEL COMMENT

\usepgfmodule{oo}

% Argument 1: middle separator
% Argument 2: last separator
% If this is not a robust command, then the package `titling` breaks
\DeclareRobustCommand\ShowAuthors[2]{%
  \ShowAuthorsSignal.emit({#1},{#2})%
}

\pgfoonew \ShowAuthorsSignal=new signal()

% Argument: separator
\DeclareRobustCommand\ShowAffiliations[1]{%
  \ShowAffiliationsSignal.emit({#1})%
}

\pgfoonew \ShowAffiliationsSignal=new signal()

\newcommand\Author[1]{
  \pgfoonew \CurrentPerson=new person()
  \CurrentPerson.set author(#1)
}

\newcommand\Email[1]{
  \CurrentPerson.set email(#1)
}

\newcommand\Address[1]{
  \CurrentPerson.set address(#1)
}

\newcommand\FirstPerson{0}
\newcommand\LastPerson{}

\pgfooclass{person}{
  \attribute author;
  \attribute email;
  \attribute address;

  \method person() {
    \pgfoothis.get id(\theid)

    % If not set before, sets `\FirstPerson` to the current object id
    \ifnum \FirstPerson=0
      \edef\FirstPerson{\theid}
    \fi

    % Sets `\LastPerson` to the current object id
    \edef\LastPerson{\theid}

    \pgfoothis.get handle(\me)
    \ShowAuthorsSignal.connect(\me,show author)
    \ShowAffiliationsSignal.connect(\me,show affiliation)
  }

  \method get author(#1) {
    \pgfooget{author}{#1}
  }

  \method set author(#1) {
    \pgfooset{author}{#1}
  }

  \method get email(#1) {
    \pgfooget{email}{#1}
  }

  \method set email(#1) {
    \pgfooset{email}{#1}
  }

  \method get address(#1) {
    \pgfooget{address}{#1}
  }

  \method set address(#1) {
    \pgfooset{address}{#1}
  }

  % Argument 1: middle separator
  % Argument 2: last separator
  \method show author(#1,#2) {%
    \pgfoothis.get id(\theid)%
    \pgfooget{author}{\theauthor}%
    %
    % Printing the separator
    \ifnum\theid=\FirstPerson%
    \else%
      \ifnum\theid=\LastPerson%
        #2%
      \else%
        #1%
      \fi%
    \fi%
    \mbox{\theauthor}%
  }

  % Argument: separator
  \method show affiliation(#1,#2) {%
    \pgfoothis.get id(\theid)%
    \pgfooget{author}{\theauthor}%
    \pgfooget{email}{\theemail}%
    \pgfooget{address}{\theaddress}%
    %
    % Printing the separator
    \ifnum\theid=\FirstPerson%
      #1%
    \else%
      #2%
    \fi%
    \noindent\begin{minipage}{\linewidth}
      \noindent\begin{tabular}[t]{@{}l}
        \theauthor \quad \texttt{\theemail}\\
      \end{tabular}\\
      \theaddress%
    \end{minipage}%
  }
}

% https://tex.stackexchange.com/a/147205
\newcommand\blfootnote[1]
  {%
    \begingroup%
      \renewcommand\thefootnote{}%
      \footnote{#1}%
      \addtocounter{footnote}{-1}%
    \endgroup%
  }

\newcommand*\abs[1]{\left\lvert #1 \right\rvert}

\usepackage{mathtools}
\usepackage{calc}

\Author{Erik Paemurru}
\Address{Mathematik und Informatik, Universität des Saarlandes, 66123 Saarbrücken, Germany}
\Email{paemurru@math.uni-sb.de}

\title{Reading the log canonical threshold of a plane curve singularity from its Newton polyhedron}
\author{\ShowAuthors{, }{, }}
\date{26th~April 2024}
\newcommand\keywords{Complex singularity exponent, complex oscillation index, Newton polygon, remoteness}
% Newton polytope, Newton diagram, Newton boundary
% https://msc2020.org/ or https://zbmath.org/classification/
\newcommand\subjclass{14H20 (Primary) 14B05, 32S25, 14Q05, 14H50}

\begin{document}

\maketitle

\begin{abstract}
There is a proposition due to Kollár 1997 on computing log canonical thresholds of certain hypersurface germs using weighted blowups, which we extend to weighted blowups with non-negative weights. Using this, we show that the log canonical threshold of a convergent complex power series is at most $1/c$, where $(c, \ldots, c)$ is a point on a facet of its Newton polyhedron. Moreover, in the case $n = 2$, if the power series is weakly normalised with respect to this facet or the point $(c, c)$ belongs to two facets, then we have equality. This generalises a theorem of Varchenko 1982 to non-isolated singularities.
\blfootnote{\textup{2020} \textit{Mathematics Subject Classification}. \subjclass{}.}
\blfootnote{\textit{Keywords}. \keywords{}.}
\end{abstract}

\tableofcontents

\section*{Declarations}

\paragraph{Availability of data and material.} There is no supporting data.

\paragraph{Competing interests.} There are no competing interests.

\paragraph{Funding.} This is a contribution to Project-ID 286237555 -- TRR 195 -- by the Deutsche Forschungsgemeinschaft (DFG, German Research Foundation).

\paragraph{Authors' contributions.} The article is independently authored by Erik Paemurru.

\paragraph{Acknowledgements.} I would like to thank Janko Böhm and Magdaleen S. Marais for discussions on \cite{BMP20}.

\section{Introduction}

Let $f$ be a holomorphic function, not identically zero, on a domain of $\mathbb C^n$ where $n$ is any positive integer and let $P$ be a zero of~$f$. The log canonical threshold of $f$ at $P$ can be characterised in many ways:
\begin{enumerate}[label=(\arabic*), ref=\arabic*]
\item it is the greatest positive rational number $\lambda$ such that the pair $(\mathbb C^2, \lambda V(f))$ is log canonical (\cref{def:lct}),
\item it is the supremum of rational numbers $\lambda$ such that $\abs{f}^{-\lambda}$ is $L^2$ in a neighbourhood of~$P$, sometimes called the \emph{complex singularity exponent} (\cite[Definition~8.4]{KK13}),
\item it is the smallest \emph{jumping number} of $V(f)$ (\cite[Definition~9.3.22]{Laz04b}),
\item it is the negative of the largest zero of the Bernstein--Sato polynomial of~$f$ (\cite[Theorem~10.6]{Kol97}),
\item if $f$ defines an isolated singularity, then the log canonical threshold is equal to $\min(1, \beta_\mathbb C(f))$ where $\beta_\mathbb C(f)$ is the \emph{complex singular index} (see \cite[Section~4]{Var82} or \cite[Theorem~9.5]{Kol97}), called the \emph{complex oscillation index} in \cite[13.1.5]{AGZV12b} and the \emph{complex singularity index} in \cite{Ste85},
\item if $f$ defines an isolated singularity, then the complex singular index minus one coincides with the smallest \emph{spectrum number} (\cite[p~558]{Ste85}).
\end{enumerate}
If $f$ does not define an isolated singularity, then it is not clear whether the log canonical threshold and the complex singular index coincide (\cite[9.6]{Kol97}).

Considering $f$ as a power series in $\mathbb C\{x_1, \ldots, x_n\}$, the Newton polyhedron (\cref{def:Newton polyhedron}) is the convex hull of all the points $(i_1, \ldots, i_n) \in \mathbb R^n$ such that the monomial $x_1^{i_1} \cdot \ldots \cdot x_n^{i_n}$ has a non-zero coefficient in some power series $g$ in the ideal generated by~$(f)$. The \emph{distance to the Newton polyhedron} is defined as the rational number $c$ such that the point $(c, \ldots, c)$ is on the boundary of the Newton polyhedron of~$f$.

In case of isolated plane curve singularities, by \cite[Theorem~4.4]{Var82} there is a coordinate change (described in \cite{Var77} for isolated real plane curve singularities) such that the log canonical threshold is equal to the reciprocal of the distance. The proof is analytic in nature. See \cite[Theorem~6.40]{KSC04} for an algebraic proof. Note that the proof of \cite[Theorem~6.40]{KSC04} works only for reduced plane curves since the last sentence of the proof uses finite determinacy of isolated singularities.

The main result of this paper is extending \cite[Theorem~4.4]{Var82} to non-isolated plane curve singularities:

\newcommand\MainTheorem{
  Let $f \in \mathbb C\{x_1, \ldots, x_n\}$ be any non-zero power series satisfying $f(\bm 0) = 0$. Let $c \in \mathbb Q_{>0}$ be the unique number such that a (not necessarily compact) facet $\Delta$ of the Newton polyhedron of $f$ contains the point $(c, \ldots, c) \in \mathbb R^n$. Then, $\operatorname{lct}_{\bm 0}(f) \leq 1/c$. Moreover, in the case $n = 2$, if $f$ is weakly normalised with respect to~$\Delta$ or the point $(c, c)$ is in the intersection of two facets, then $\operatorname{lct}_{\bm 0}(f) = 1/c$.
}
\begin{theorem*}[= {\cref{thm:reading lct}}] \label{thm:main}
\MainTheorem{}
\end{theorem*}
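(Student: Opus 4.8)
The plan is to prove the inequality $\operatorname{lct}_{\bm 0}(f) \leq 1/c$ first, and then the equality in the case $n=2$ under the stated hypotheses. For the inequality, I would exhibit a divisorial valuation—or rather a toric one coming from the facet $\Delta$—witnessing the bound. Concretely, let $\bm a = (a_1, \ldots, a_n)$ be the primitive normal vector to the facet $\Delta$ with non-negative (by convexity of a facet meeting the diagonal) entries, so that the facet lies in the hyperplane $\{\bm a \cdot \bm x = m\}$ where $m = \bm a \cdot (c,\ldots,c) = c(a_1 + \cdots + a_n)$. Performing the weighted blowup with weights $\bm a$ introduces an exceptional divisor $E$ with $\operatorname{ord}_E(f) = m$ (since the facet is where the Newton polyhedron meets this supporting hyperplane) and discrepancy $a_1 + \cdots + a_n - 1$. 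The inequality $\operatorname{lct}_{\bm 0}(f) \leq (a_1 + \cdots + a_n)/m = 1/c$ then follows from the valuative criterion for log canonicity. The only subtlety is that some $a_i$ may be zero (the facet is non-compact), so the blowup is along a non-generic toric center; this is precisely why the author has extended Kollár's proposition to weighted blowups with non-negative weights, and I would invoke that extension directly to compute $\operatorname{ord}_E$ and the discrepancy and conclude.

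For the reverse inequality when $n = 2$, I would use the same weighted blowup $\pi\colon Y \to \mathbb C^2$ with weights $\bm a = (a_1, a_2)$ and study the log canonical threshold via adjunction/inversion of adjunction along $E$. Write $K_Y + E + \frac{1}{c}\widetilde{V(f)} = \pi^*(K_{\mathbb C^2} + \frac1c V(f)) + \bigl(a_1 + a_2 - 1 - \tfrac{m}{c}\bigr) E = \pi^*(\cdots) + 0 \cdot E$, so that $(Y, E + \frac1c \widetilde{V(f)})$ is crepant over $(\mathbb C^2, \frac1c V(f))$, and $\operatorname{lct}_{\bm 0}(f) = 1/c$ will follow once I show $(Y, E + \frac1c\widetilde{V(f)})$ is log canonical near $\pi^{-1}(\bm 0)$, for which by inversion of adjunction it suffices to check log canonicity of the different on $E \cong \mathbb P^1$ (or a weighted $\mathbb P^1$). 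The different is supported at the (at most three) special points of the toric $E$ and at the points where the strict transform $\widetilde{V(f)}$ meets $E$; the hypotheses enter exactly here. If $(c,c)$ lies on two facets, the weight $\bm a$ is an interior ray and $\widetilde{V(f)}$ meets $E$ only at smooth points disjoint from the torus-fixed points, so the different has no bad coefficients. If instead $f$ is weakly normalised with respect to $\Delta$ (a condition I expect is defined earlier to mean the facet part $f_\Delta$ has no repeated factors of high enough multiplicity, or that the restriction to $E$ defines a reduced-enough divisor), then the multiplicities of $\widetilde{V(f)} \cap E$ are controlled and again the coefficient of each point in $E + \frac1c\widetilde{V(f)}|_E$ stays $\leq 1$.

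The main obstacle, as I see it, is the bookkeeping at the torus-fixed points of the exceptional $\mathbb P^1$—i.e. making the inversion-of-adjunction argument honest when $E$ is a weighted projective line with quotient singularities and when one of the weights vanishes so that $Y$ is only partially a blowup (the center is a coordinate axis rather than the origin). In that degenerate case one must be careful that "log canonical near $\pi^{-1}(\bm 0)$" is the right localisation and that the non-compact direction of $\Delta$ does not contribute a spurious non-lc point; I would handle this by restricting attention to an affine chart of $Y$ containing $\pi^{-1}(\bm 0)$ and checking the two or three relevant local equations directly. A secondary technical point is verifying that the weakly-normalised hypothesis is genuinely what is needed—equivalently, producing a counterexample shape when it fails—so that the theorem statement is tight; I would compare against Varchenko's isolated case, where weak normalisation is automatic after the coordinate change of \cite{Var77}, to calibrate the computation.
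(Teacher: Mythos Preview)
Your approach is essentially the paper's: both the inequality and the equality come from applying the extended Koll\'ar proposition (\cref{thm:weighted formula power series}) with $\bm w$ the primitive inward normal to~$\Delta$, so that $\operatorname{wt}_{\bm w}(f)/\sum w_i = c$. For the equality the paper does not redo inversion of adjunction on $E$ by hand; that step is already packaged inside \cref{thm:weighted formula power series}, and the proof of \cref{thm:reading lct} only has to verify its hypothesis, namely that $(\mathbb C^2,\tfrac{1}{c}V(f_{\bm w}))$ is log canonical outside the centre~$C$. In dimension~$2$ this is exactly the statement that every irreducible factor of $f_{\bm w}$ occurs with multiplicity at most~$c$, and the weakly-normalised condition is used precisely to secure this bound via a short case split on whether each $w_i$ is $0$, $1$, or $>1$. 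So your ``different on $E$'' computation and the paper's ``multiplicities of $f_{\bm w}$'' computation are the same check viewed from the two sides of \cref{thm:weighted formula power series}.

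One concrete point in your sketch is wrong, though. In the two-facets case one picks $\bm w$ in the interior of the normal cone at the vertex $(c,c)$, so that $f_{\bm w}$ is the single monomial $x^c y^c$. The strict transform of $V(f)$ then meets $E$ \emph{exactly at} the two torus-fixed points (with local multiplicity~$c$), not away from them. The reason the pair is log canonical is not disjointness but that $\tfrac{1}{c}V(x^c y^c)=V(x)+V(y)$ is already snc with coefficients equal to~$1$; the paper simply observes this and invokes \cref{thm:weighted formula power series}. Your handling of the degenerate case where one weight vanishes is also more delicate than ``restrict to a chart'': in \cref{thm:weighted formula power series} that case is treated separately by inversion of adjunction along the hyperplane $V(x_{i_0})$ rather than along an exceptional divisor, and in the proof of \cref{thm:reading lct} it reduces to the trivial observation that $f_{\bm w}=u\,x_1^c x_2^k$ with $k\le c$.
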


The proof relies on \cref{thm:weighted formula power series}, which is a generalisation of \cite[Proposition~8.14]{Kol97} to weighted blowups with non-negative weights.

A generalisation of \cite[Algorithm~4]{BMP20} shows how to find a coordinate change such that $f \in \mathbb C\{x, y\}$ is normalised with respect to every facet of the Newton polyhedron (\cref{thm:normalised Newton polyhedron}). To read off the log canonical threshold, a weaker condition is enough, which we call \emph{weakly normalised} (\cref{def:normalised}). In case the singularity is non-isolated, \cref{thm:normalised Newton polyhedron} only gives a formal coordinate system, which is enough for computing the log canonical threshold (\cref{rem:algorithm}\labelcref{itm:algorithm up to arbitrary precision}). In case the principal part of $f \in \mathbb C\{x, y\}$ is non-degenerate, by \cite[Remark~3.28]{BMP20}, all power series that are right equivalent to $f$ and are normalised with respect to every compact facet of their Newton polyhedrons have the same Newton polyhedron. This does not always hold without the non-degeneracy assumption as shown by \cref{exa:two normalised Newton polyhedrons,exa:minimal Newton polyhedrons}.

The log canonical threshold at a point of a reduced plane curve can also be computed using Puiseux expansions. An explicit formula is given in \cite[Theorem~2.13]{GHM16}, which generalises \cite[Theorems 1.2 and~1.3]{Kuw99}, that depends only on the first two maximal contact values of the branches and their intersection numbers.

Lastly, we mention some similar results that have been proved in higher dimensions, assuming the power series has enough monomials and with general enough coefficients. More precisely, we say that a power series $f \in \mathbb C\{x_1, \ldots, x_n\}$ is \emph{non-degenerate} with respect to a face $\sigma$ of its Newton polyhedron if the polynomials
\[
\frac{\partial f_\sigma}{\partial x_1}, \ldots, \frac{\partial f_\sigma}{\partial x_n}
\]
do not have common zeros in $(\mathbb C \setminus \{0\})^n$, where $f_\sigma$ is the sum of the terms of $f$ lying on~$\sigma$. We say that $f$ has \emph{non-degenerate principal part} if $f$ is non-degenerate with respect to all compact faces (\cite[Definition~6.2.2]{AGZV12b}). As a stronger condition, we say that $f$ is \emph{non-degenerate} if $f$ is non-degenerate with respect to all faces (\cite[Definition~5]{How03}). In particular, if $f$ is non-degenerate, then $f$ has to be non-singular outside the coordinate hyperplanes.

By \cite[Theorem~13.2(1)]{AGZV12b}, if $f \in \mathbb C\{x_1, \ldots, x_n\}$ defines an isolated singularity and has non-degenerate principal part, then the log canonical threshold of $f$ is at most the reciprocal of the distance of the Newton polyhedron, with equality if the distance is greater than~$1$. The proof is analytic.

If $f$ is a non-degenerate polynomial, then we can combinatorially compute all the jumping numbers, not only the log canonical threshold, and in fact describe the multiplier ideal of the divisor defined by~$f$. Namely, let $\mathfrak a_f$ denote the ideal generated by the monomials appearing with a non-zero coefficient in~$f$.
By \cite[Main Theorem]{How01} (also given in \cite[Theorem~9.3.27]{Laz04b}), the multiplier ideal $\mathcal J(r \cdot \mathfrak a)$ of a monomial ideal $\mathfrak a \subseteq \mathbb C[x_1, \ldots, x_n]$ coincides with the ideal generated by all monomials $x_1^{i_1} \cdot \ldots \cdot x_n^{i_n}$ such that $(i_1, \ldots, i_n) + (1, \ldots, 1)$ is in the interior of~$rP$, where $rP$ is the homothety with factor $r$ applied to the the Newton polyhedron $P$ of~$\mathfrak a$.
In the preprint \cite[Theorem~12]{How03}, it is proved that if $f$ is non-degenerate, then a toric log resolution of $\mathfrak a_f$ also log-resolves $\operatorname{div} f$. In this case, by \cite[Corollary~13]{How03} (stated also in \cite[Theorem~9.3.37]{Laz04b}), for every rational number~$0 < r < 1$, we have the equality of multiplier ideals,
\[
\mathcal J(r \cdot \operatorname{div} f) = \mathcal J(r \cdot \mathfrak a_f).
\]
As detailed in \cite[Example~9.3.31]{Laz04b}, this recovers the result on log canonical thresholds of \cite[Theorem~13.2(1)]{AGZV12b} in the special case of non-degenerate polynomials.

\section{Preliminaries}

\subsection{Power series and Newton polyhedron}

\begin{definition}
The field of complex numbers is denoted~$\mathbb C$.
The $\mathbb C$-algebra of power series in variables $x_1, \ldots, x_n$ that are absolutely convergent in a neighbourhood of $\bm 0$ is denoted by $\mathbb C\{x_1, \ldots, x_n\}$, where $n$ is a positive integer.
The (possibly non-reduced) complex subspace of $\mathbb C^n$ defined by a convergent power series $f \in \mathbb C\{x_1, \ldots, x_n\}$ is denoted by $V(f)$.
The \textbf{saturation} of a power series $f \in \mathbb C[[x_1, \ldots, x_n]]$, denoted $\operatorname{sat}(f)$, is the power series $f / (x_1^{a_1} \cdot \ldots \cdot x_n^{a_n})$ where $a_1, \ldots, a_n \in \mathbb Z_{\geq0}$ are as large as possible.

Two formal power series $f, g \in \mathbb C[[x_1, \ldots, x_n]]$ are \textbf{formally right equivalent} if there exists a $\mathbb C$-algebra isomorphism $\Phi$ of $\mathbb C[[x_1, \ldots, x_n]]$ such that $\Phi(f) = g$. Two convergent power series $f, g \in \mathbb C\{x_1, \ldots, x_n\}$ are \textbf{right equivalent} if there exists a $\mathbb C$-algebra isomorphism $\Phi$ of $\mathbb C\{x_1, \ldots, x_n\}$ such that $\Phi(f) = g$ (\cite[Definition~I.2.9]{GLS07}).

Let $\bm w = (w_1, \ldots, w_n) \in \mathbb Q_{\geq0}^n$ be non-negative rational numbers, not all zero, and let
\[
f := \sum_{\crampedclap{i_1, \ldots, i_n \in \mathbb Z_{\geq0}}} a_{i_1, \ldots, i_n} x_1^{i_1} \cdot \ldots \cdot x_n^{i_n}
\]
be a power series in $\mathbb C[[x_1, \ldots, x_n]]$, where $a_{i_1, \ldots, i_n}$ are complex numbers. The \textbf{$\bm w$-weight} of~$f$, denoted $\operatorname{wt}_{\bm w}(f)$, is defined to be
\[
\operatorname{wt}_{\bm w}(f) := \min \mleft\{ i_1 w_1 + \ldots + i_n w_n \;\middle|\; \begin{aligned}
  & i_1, \ldots, i_n \in \mathbb Z_{\geq0},~ \text{the coefficient}\\
  & \text{of } x_1^{i_1} \cdot \ldots \cdot x_n^{i_n} \text{ is non-zero in~$f$}
\end{aligned} \mright\} \in \mathbb Q_{\geq0} \cup \{\infty\}.
\]
If $f$ is not zero, then the \textbf{$\bm w$-weighted-homogeneous leading term} of~$f$, denoted $f_{\bm w}$, is defined to be
\[
\sum_{\crampedclap{\substack{i_1, \ldots, i_n \in \mathbb Z_{\geq0}\\i_1 w_1 + \ldots + i_n w_n = \operatorname{wt}_{\bm w}(f)}}} a_{i_1, \ldots, i_n} x^{i_1} \cdot \ldots \cdot x^{i_n}.
\]
\end{definition}

\begin{definition}[{\cite[Section~12.7]{AGZV12a}, \emph{local Newton polytope} in \cite[Definition~2.14]{GLS07}}] \label{def:Newton polyhedron}
The \textbf{Newton polyhedron} of $f \in \mathbb C[[x_1, \ldots, x_n]]$ is the subset of $\mathbb R^n$ given by the convex hull of the union of the subsets $(i_1, \ldots, i_n) + (\mathbb R_{\geq0})^n$ taken over non-negative integers $i_1, \ldots, i_n$ such that $x_1^{i_1} \cdot \ldots \cdot x_n^{i_n}$ has a non-zero coefficient in~$f$.
\end{definition}

\begin{definition}[{\cite[Definition~I.2.15]{GLS07}}]
Let $f \in \mathbb C[[x_1, \ldots, x_n]]$ be a power series such that its Newton polyhedron has non-empty intersection with every coordinate axis. We say that $f$ is \textbf{Newton non-degenerate} or has \textbf{non-degenerate Newton boundary} if for every compact facet $\Delta$ of its Newton polyhedron and non-zero normal vector $\bm w \in \mathbb Z_{\geq0}^2$ of~$\Delta$, $\operatorname{sat}(f_{\bm w})$ defines an isolated singularity.
\end{definition}

\subsection{Log canonical threshold and weighted blowups}

\begin{definition}[{\cite[Notation~0.4]{KM98}}]
Let $X$ be a smooth complex space. A \textbf{$\mathbb Q$-divisor} on $X$ is a formal $\mathbb Q$-linear combination $\sum \lambda_i D_i$ of prime divisors $D_i$ where $\lambda_i \in \mathbb Q$. A $\mathbb Z$-divisor $\sum D_i$ is \textbf{snc} if all the prime divisors $D_i$ are smooth and around every point of~$X$, $\sum D_i$ is locally given by $V(x_1 \cdot \ldots \cdot x_k)$ in $\mathbb C^n$ where $0 \leq k \leq n$.

Let $D$ be a $\mathbb Q$-divisor on a smooth complex space~$X$. A \textbf{log resolution of $(X, D)$} is a
proper modification (\cite[Definition VII.1.1]{CDG+94}) $\pi\colon X' \to X$ from a smooth complex space $X'$
such that the exceptional locus $E$ of $\pi$ is of pure codimension $1$ and $E \cup \pi^{-1}(\operatorname{Supp}(D))$ is snc.

For any proper bimeromorphic holomorphism $\varphi\colon X' \to X$ from a smooth complex space~$X'$, the \textbf{relative canonical divisor} of~$\varphi$, denoted~$K_\varphi$, is the unique $\mathbb Q$-divisor that is linearly equivalent to $\varphi^*(K_X) - K_{X'}$ and supported on the exceptional locus of~$\varphi$, where $K_X$ and $K_{X'}$ are the canonical classes of respectively $X$ and~$X'$. The \textbf{log pullback} of $D$ with respect to $\varphi$ is the $\mathbb Q$-divisor $D' = K_\varphi + \varphi^* D$ on~$X'$. We have the $\mathbb Q$-linear equivalence $K_{X'} + D' \sim \varphi^*(K_X + D)$.
\end{definition}

\begin{definition}[{\cite[Definition 3.5]{Kol97} or \cite[Definition~2.34]{KM98}}] \label{def:lct}
Let $D$ be a $\mathbb Q$-divisor on a smooth complex space $X$ and let $P \in X$ be a point. The pair $(X, D)$ is \textbf{log canonical at~$P$} if we can restrict $(X, D)$ to an open neighbourhood of $P$ such that there exists a log resolution with the coefficient of every prime divisor in the log pullback of $D$ at most~$1$. The \textbf{log canonical threshold of $(X, D)$ at~$P$} is
\[
\operatorname{lct}_P(X, D) := \operatorname{sup}\bigl\{ \lambda \in \mathbb Q_{>0} \;\big|\; (X, \lambda D) \text{ is log canonical at~$P$} \bigr\}.
\]
If $f \in \mathbb C\{x_1, \ldots, x_n\}$ is not zero, then $\operatorname{lct}_{\bm 0}(f)$ is the log canonical threshold of $(\mathbb C^n, V(f))$ at the origin, where $\mathbb C^n$ and $V(f)$ are considered as complex spaces defined around the origin.
\end{definition}

\begin{proposition}[{\cite[Proposition~8.19]{Kol97}}] \label{thm:approximation}
Let $f \in \mathbb C\{x_1, \ldots, x_n\}$ be non-zero and satisfy $f(\bm 0) = 0$. Let $d$ be a non-negative integer such that the truncation $f_{\leq d}$ of $f$ up to degree~$d$ is non-zero. Then,
\[
\abs{\operatorname{lct}_{\bm 0}(f) - \operatorname{lct}_{\bm 0}(f_{\leq d})} \leq \frac{n}{d+1}.
\]
\end{proposition}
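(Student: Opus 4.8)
The inequality $\abs{\operatorname{lct}_{\bm 0}(f)-\operatorname{lct}_{\bm 0}(f_{\leq d})}\leq\tfrac{n}{d+1}$ is equivalent to the two bounds $\operatorname{lct}_{\bm 0}(f)\leq\operatorname{lct}_{\bm 0}(f_{\leq d})+\tfrac{n}{d+1}$ and $\operatorname{lct}_{\bm 0}(f_{\leq d})\leq\operatorname{lct}_{\bm 0}(f)+\tfrac{n}{d+1}$, and I would obtain both from a single statement: if $u,v\in\mathbb C\{x_1,\ldots,x_n\}$ are non-zero with $u(\bm 0)=v(\bm 0)=0$ and $u-v\in\mathfrak m^{d+1}$, where $\mathfrak m=(x_1,\ldots,x_n)$ is the maximal ideal, then $\operatorname{lct}_{\bm 0}(u)\leq\operatorname{lct}_{\bm 0}(v)+\tfrac{n}{d+1}$. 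Applying this with $(u,v)=(f,f_{\leq d})$ and with $(u,v)=(f_{\leq d},f)$ — in both cases $u-v=\pm f_{>d}\in\mathfrak m^{d+1}$, and $u,v$ vanish at $\bm 0$ since $f$ does — yields the proposition.

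To prove that statement, note that $u=v+(u-v)$ with $u-v\in\mathfrak m^{d+1}$ gives an inclusion of ideals $(u)\subseteq(v)+\mathfrak m^{d+1}$. The log canonical threshold at $\bm 0$ is monotone under inclusion of ideals — on a common log resolution, an inclusion of ideals translates into a pointwise inequality between the orders of vanishing along the prime divisors, hence between the corresponding minima — and $\operatorname{lct}_{\bm 0}(u)$ is the threshold of the pair $(\mathbb C^n,V(u))$, equivalently of the principal ideal $(u)$; so $\operatorname{lct}_{\bm 0}(u)\leq\operatorname{lct}_{\bm 0}\bigl((v)+\mathfrak m^{d+1}\bigr)$. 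Blowing up $\bm 0$ gives a single exceptional prime divisor $E$, along which $\mathfrak m$ vanishes to order $1$ and which has log discrepancy $n$; hence $\operatorname{lct}_{\bm 0}(\mathfrak m)=n$ and $\operatorname{lct}_{\bm 0}(\mathfrak m^{d+1})=\tfrac{n}{d+1}$. Thus it is enough to prove
\[
\operatorname{lct}_{\bm 0}(\mathfrak a+\mathfrak b)\leq\operatorname{lct}_{\bm 0}(\mathfrak a)+\operatorname{lct}_{\bm 0}(\mathfrak b)
\]
for all ideals $\mathfrak a,\mathfrak b\subseteq\mathbb C\{x_1,\ldots,x_n\}$, and apply it to $\mathfrak a=(v)$, $\mathfrak b=\mathfrak m^{d+1}$.

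This last inequality is the heart of the matter, and it is where the naive approach fails: the divisorial valuation computing $\operatorname{lct}_{\bm 0}(\mathfrak a+\mathfrak b)$ is in general a mixed one — essentially a weighted blowup that balances $\mathfrak a$ against $\mathfrak b$ — appearing on no log resolution of $\mathfrak a$ or of $\mathfrak b$ alone, so one cannot argue divisor by divisor. I would deduce it from the summation theorem for multiplier ideals, $\mathcal J\bigl((\mathfrak a+\mathfrak b)^{c}\bigr)\subseteq\sum_{\alpha+\beta=c,\ \alpha,\beta\geq 0}\mathcal J(\mathfrak a^{\alpha}\mathfrak b^{\beta})$, together with the subadditivity theorem $\mathcal J(\mathfrak a^{\alpha}\mathfrak b^{\beta})\subseteq\mathcal J(\mathfrak a^{\alpha})\,\mathcal J(\mathfrak b^{\beta})$ on the smooth germ $(\mathbb C^n,\bm 0)$. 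Indeed, take any $c>\operatorname{lct}_{\bm 0}(\mathfrak a)+\operatorname{lct}_{\bm 0}(\mathfrak b)$; then in every decomposition $c=\alpha+\beta$ with $\alpha,\beta\geq 0$ one has $\alpha>\operatorname{lct}_{\bm 0}(\mathfrak a)$ or $\beta>\operatorname{lct}_{\bm 0}(\mathfrak b)$, so $\mathcal J(\mathfrak a^{\alpha})$ or $\mathcal J(\mathfrak b^{\beta})$ is a proper ideal at $\bm 0$, whence each summand lies in $\mathfrak m$; therefore $\mathcal J\bigl((\mathfrak a+\mathfrak b)^{c}\bigr)$ is a proper ideal at $\bm 0$, which forces $\operatorname{lct}_{\bm 0}(\mathfrak a+\mathfrak b)\leq c$, and letting $c$ decrease to $\operatorname{lct}_{\bm 0}(\mathfrak a)+\operatorname{lct}_{\bm 0}(\mathfrak b)$ gives the inequality. (Alternatively, the inequality can be proved directly by combining near-optimal divisorial valuations of $\mathfrak a$ and of $\mathfrak b$ into a single divisor via a weighted blowup with weights inversely proportional to the respective orders of vanishing; the multiplier-ideal argument merely packages the bookkeeping needed to put the two valuations on a common model.) All the remaining steps — the reduction to the two one-sided bounds, the ideal inclusion and monotonicity, and the value of $\operatorname{lct}_{\bm 0}(\mathfrak m^{d+1})$ — are routine, so this subadditivity-for-sums inequality is the only real obstacle.
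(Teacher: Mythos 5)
The paper does not prove this proposition at all: it is imported verbatim from \cite[Proposition~8.19]{Kol97}, so there is no in-paper argument to compare with; the standard short route behind that citation (cf.\ Demailly--Koll\'ar's elementary properties of complex singularity exponents) is the function-level inequality $\operatorname{lct}_{\bm 0}(g+h)\le\operatorname{lct}_{\bm 0}(g)+\operatorname{lct}_{\bm 0}(h)$ for holomorphic germs, applied with $h=f_{>d}$ together with $\operatorname{lct}_{\bm 0}(h)\le n/\operatorname{mult}_{\bm 0}(h)\le n/(d+1)$. Your proposal is sound and is essentially the ideal-theoretic version of that argument: the reductions (the two one-sided bounds from $u-v\in\mathfrak m^{d+1}$, the inclusion $(u)\subseteq(v)+\mathfrak m^{d+1}$, monotonicity, and $\operatorname{lct}_{\bm 0}(\mathfrak m^{d+1})=n/(d+1)$) are correct and routine, and you rightly identify $\operatorname{lct}_{\bm 0}(\mathfrak a+\mathfrak b)\le\operatorname{lct}_{\bm 0}(\mathfrak a)+\operatorname{lct}_{\bm 0}(\mathfrak b)$ as the only substantive point; your deduction of it from the summation theorem of Musta\c{t}\u{a} is valid, and you do not even need subadditivity, since $\mathcal J(\mathfrak a^{\alpha}\mathfrak b^{\beta})\subseteq\mathcal J(\mathfrak a^{\alpha})\cap\mathcal J(\mathfrak b^{\beta})$ already makes every summand proper at~$\bm 0$. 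Two caveats. First, the summation and subadditivity theorems are usually stated for ideal sheaves in the algebraic category, whereas in the direction $\operatorname{lct}_{\bm 0}(f_{\le d})\le\operatorname{lct}_{\bm 0}(f)+n/(d+1)$ you apply them to $\mathfrak a=(f)$ with $f$ merely a convergent power series; you should either invoke the analytic counterparts or simply use the germ-level inequality quoted above, which handles $\mathbb C\{x_1,\ldots,x_n\}$ directly and makes the ideal apparatus unnecessary. Second, the parenthetical suggestion that the key inequality follows ``directly'' by combining two near-optimal valuations through a weighted blowup should not be leaned on: making that precise is essentially as hard as the summation theorem itself. In short: correct strategy, heavier machinery than the source, with one citation-level point to patch in the analytic setting.
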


\begin{remark}
See \cite[Proposition-Definition~10.3]{KM92} for a toric description and \cite[Definition~4.56]{KM98} for an algebraic description of weighted blowups of affine space with positive integer weights.

If the variables of an affine space $\mathbb A^k$ over $\mathbb C$ have non-negative integer weights, not all zero, then write $\mathbb A^k$ as a product $\mathbb A^m \times \mathbb A^n$ where all the weights of $\mathbb A^m$ are zero and all the weights of $\mathbb A^n$ are positive. The weighted blowup of $\mathbb A^k$ is the morphism $\mathrm{id}_{\mathbb A^m} \times \varphi$ where $\varphi$ is the weighted blowup of $\mathbb A^n$.

If $X$ is a complex subspace of~$\mathbb C^k$, then the weighted blowup of $X$ is the induced holomorphism from strict transform of $X$ with respect to the analytification of the weighted blowup of $\mathbb A^k$.
\end{remark}

\begin{lemma} \label{thm:discrepancy}
Let $\varphi\colon W \to \mathbb C^n$ be a weighted blowup of $\mathbb C^n$ with non-negative weights $\bm w = (w_1, \ldots, w_n)$ satisfying $\gcd(w_1, \ldots, w_n) = 1$. Let $E$ be the exceptional prime divisor. Let $f \in \mathbb C\{x_1, \ldots, x_n\}$ be any non-zero power series. Let $\lambda$ be any rational number. Then, the coefficient of $E$ in the log pullback of $\lambda V(f)$ is $1 + \lambda \operatorname{wt}_{\bm w}(f) - \sum w_i$.
\end{lemma}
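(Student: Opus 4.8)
The plan is to reduce to the familiar case of a weighted blowup with strictly positive weights, where the statement is the classical discrepancy-plus-multiplicity computation, and then to keep careful track of the weight-zero variables and of the sign convention for $K_\varphi$ used in this paper. Concretely, after reordering coordinates I may assume $w_1 = \cdots = w_m = 0$ and $w_{m+1}, \ldots, w_n > 0$, so that by the description recalled in the remark preceding the lemma, $\varphi = \mathrm{id}_{\mathbb C^m} \times \varphi'$ where $\varphi'\colon W' \to \mathbb C^{n-m}$ is the weighted blowup of $\mathbb C^{n-m}$ with positive weights $\bm w' = (w_{m+1}, \ldots, w_n)$. Since $\gcd(\bm w') = \gcd(\bm w) = 1$, the exceptional divisor $E' \subset W'$ is a prime divisor and $E = \mathbb C^m \times E'$. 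Writing $W = \mathbb C^m \times W'$ and using that the canonical divisor of a product is the sum of the pullbacks of the canonical divisors of the factors, together with $\mathrm{pr}_1 \circ \varphi = \mathrm{pr}_1$ and $\mathrm{pr}_2 \circ \varphi = \varphi' \circ \mathrm{pr}_2$, I obtain $K_\varphi = \mathrm{pr}_2^* K_{\varphi'}$, so the coefficient of $E$ in $K_\varphi$ equals the coefficient of $E'$ in $K_{\varphi'}$.

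Next I would assemble the two ingredients in the positive-weight case. For the relative canonical divisor, in a standard affine chart of $\varphi'$ the map is monomial ($x_j = u^{w_j}$ and $x_i = u^{w_i}\tilde x_i$ for $i \neq j$), and differentiating shows $\varphi'^*(dx_{m+1}\wedge\cdots\wedge dx_n)$ vanishes along $E' = \{u = 0\}$ to order $\sum_{i>m} w_i - 1$; with the paper's convention $K_\varphi \sim \varphi^* K_X - K_{X'}$ this says the coefficient of $E'$ in $K_{\varphi'}$ is $1 - \sum_{i>m} w_i = 1 - \sum_i w_i$ (alternatively one cites \cite[Proposition-Definition~10.3]{KM92} or \cite[Definition~4.56]{KM98}). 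For the pullback of $V(f)$, writing $f = \sum_\alpha x_1^{\alpha_1}\cdots x_m^{\alpha_m}\, g_\alpha(x_{m+1}, \ldots, x_n)$ gives $\varphi^* f = \sum_\alpha x_1^{\alpha_1}\cdots x_m^{\alpha_m}\,\varphi'^* g_\alpha$, and in the chart each $\varphi'^* g_\alpha$ equals $u^{\operatorname{wt}_{\bm w'}(g_\alpha)}$ times a factor whose restriction to $E'$ is the nonzero weighted leading form of $g_\alpha$; since the monomials in $x_1, \ldots, x_m$ are coordinates on $W$, the lowest-order terms for distinct $\alpha$ cannot cancel, whence $\operatorname{ord}_E(\varphi^* f) = \min_\alpha \operatorname{wt}_{\bm w'}(g_\alpha) = \operatorname{wt}_{\bm w}(f)$. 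By definition the log pullback of $\lambda V(f)$ is $K_\varphi + \lambda\,\varphi^* V(f)$, so its coefficient along $E$ is $(1 - \sum_i w_i) + \lambda \operatorname{wt}_{\bm w}(f)$, which is the claimed value.

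I expect the only genuinely delicate point to be that $W$ (equivalently $W'$) can carry quotient singularities along $E$ when the $w_i$ fail to be pairwise coprime, so that $K_W$ must be interpreted as a $\mathbb Q$-Cartier divisor and one cannot naively appeal to the definitions stated for morphisms from smooth spaces. I would deal with this by performing the Jacobian computation in the toric orbifold chart (where it is literally valid) or, equivalently, by noting that $\operatorname{ord}_E$ and the coefficient of $E$ in $K_\varphi$ are determined by the valuation $\operatorname{ord}_E$ and may therefore be computed on any birational model dominating $W$ on which these data are visible; the coprimality $\gcd(\bm w) = 1$ guarantees that $E$ is a prime divisor, so this step is clean. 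Everything else is a routine chart computation.
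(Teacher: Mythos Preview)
Your argument is correct. The paper's own proof is much terser and takes a slightly different route: it cites toric geometry (\cite[Lemma~11.4.10]{CLS11}) once to obtain $K_W = \varphi^* K_{\mathbb C^n} + (\sum w_i - 1)E$ uniformly for non-negative weights, then simply asserts that the divisorial valuation $\operatorname{ord}_E$ coincides with $\operatorname{wt}_{\bm w}$ on power series, and combines the two. There is no explicit reduction to the positive-weight case and no chart computation.

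Your approach instead splits off the weight-zero factor $\mathbb C^m$ by hand, computes the discrepancy of the positive-weight blowup $\varphi'$ via the Jacobian in a standard chart, and verifies $\operatorname{ord}_E(\varphi^* f) = \operatorname{wt}_{\bm w}(f)$ directly by expanding $f$ in the weight-zero variables. This is more self-contained (no appeal to a general toric lemma) and makes the role of the weight-zero coordinates fully explicit; the paper's version is shorter but depends on the reader accepting that \cite[Lemma~11.4.10]{CLS11} covers the non-negative-weight situation. Your remark about quotient singularities on $W$ when the positive weights are not pairwise coprime is a genuine point the paper leaves implicit in its toric citation; your suggested fix via orbifold charts or via the valuation $\operatorname{ord}_E$ is the standard and correct way to handle it.
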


\begin{proof}
Using toric geometry (\cite[Lemma~11.4.10]{CLS11}), we find
\[
K_W = \varphi^* K_{\mathbb C^n} + \bigl(\sum w_i - 1\bigr) E.
\]
Let $v$ be the discrete valuation of the field of meromorphic functions of $\mathbb C^n$ given by the order of vanishing along the exceptional divisor~$E$. Then, $v(f) = \operatorname{wt}_{\bm w}(f)$. Let $V(f) = \sum \alpha_j C_j$, where $C_j$ are prime divisors and the coefficients $\alpha_j$ are non-negative integers. Then,
\[
\sum \alpha_j (\varphi_*^{-1} C_j) = \varphi^* V(f) - \operatorname{wt}_{\bm w}(f) E.
\]
The \lcnamecref{thm:discrepancy} follows.
\end{proof}

\section{Main} \label{sec:main}

\subsection{Computing the log canonical threshold using weights} \label{sec:lct from weights}

In this \lcnamecref{sec:lct from weights}, we prove \cref{thm:weighted formula power series}, which is the main tool used in \cref{sec:lct from Newton polyhedron} for computing log canonical thresholds.

\begin{remark}
A version of \cref{thm:weighted formula power series} is proved in \cite[Proposition~8.14]{Kol97} with two differences:
\begin{enumerate}[label=(\arabic*), ref=\arabic*]
\item in \cite[Proposition~8.14]{Kol97}, the weights are required to be positive, whereas we allow any non-negative weights, not all zero,
\item the condition that $(\mathbb C^n, b \cdot V(f_{\bm w}))$ is log canonical is replaced by the condition that $(\mathbb P^{n-1}, V(f_{\bm w}(x_1^{w_1}, \ldots, x_n^{w_n})))$ is log canonical, where $\mathbb P^{n-1}$ is the exceptional divisor of the blowup of $\mathbb C^n$ at~$\bm 0$.
\end{enumerate}
The statement in \cite[Proposition~8.14]{Kol97} contains an error, it should instead say that $(\mathbb P^{n-1}, b \cdot V(f_{\bm w}(x_1^{w_1}, \ldots, x_n^{w_n})))$ is log canonical, otherwise \cref{exa:counter-example smooth,exa:counter-example Ak} are counter-examples.

\Cref{thm:weighted formula power series} is stated, without proof, for positive weights in \cite[Proposition~2.1]{Kuw99}.
\end{remark}

\begin{proposition} \label{thm:weighted formula power series}
Let $n$ be a positive integer. Let $f \in \mathbb C\{x_1, \ldots, x_n\}$ be a non-zero power series. Assign non-negative rational weights $\bm w = (w_1, \ldots, w_n)$ to the variables, not all zero. Let $f_{\bm w}$ denote the $\bm w$-weighted-homogeneous leading term of~$f$. Define $b := \sum_i w_i / \operatorname{wt}_{\bm w}(f) \in \mathbb Q_{>0} \cup \{\infty\}$. Define the subset
\[
C := V_{\mathbb C^n}(\{x_i \mid i \in \{1, \ldots, n\},\, w_i > 0\}).
\]
Then, $\operatorname{lct}_{\bm 0}(f) \leq b$.
Moreover, considering $\mathbb C^n$ and $V(f_{\bm w})$ as complex spaces defined in a neighbourhood of~$\bm 0$, if $b$ is finite and $(\mathbb C^n, b \cdot V(f_{\bm w}))$ is log canonical outside~$C$, then $\operatorname{lct}_{\bm 0}(f) = b$.
\end{proposition}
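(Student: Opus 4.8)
The plan is to reduce everything to a single weighted blowup $\varphi\colon W \to \mathbb C^n$ with weights $\bm w$, after first clearing denominators so that the $w_i$ are non-negative integers with $\gcd(w_1,\dots,w_n)=1$; this rescaling does not change $b = \sum w_i/\operatorname{wt}_{\bm w}(f)$ nor the hypotheses. Let $E$ be the exceptional prime divisor. By \cref{thm:discrepancy}, the coefficient of $E$ in the log pullback of $\lambda V(f)$ equals $1 + \lambda\operatorname{wt}_{\bm w}(f) - \sum w_i$, and this is $>1$ precisely when $\lambda > b$. Since log canonicity of $(\mathbb C^n,\lambda V(f))$ at $\bm 0$ forces every divisor over $\bm 0$ — in particular $E$ — to have log-pullback coefficient $\le 1$, we immediately get $\operatorname{lct}_{\bm 0}(f)\le b$. (If $\operatorname{wt}_{\bm w}(f)=0$, i.e.\ $b=\infty$, the inequality is vacuous, so assume $b$ finite from now on.)

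For the equality under the extra hypothesis, I would fix a rational $\lambda = b$ (or, to stay safely inside the log canonical range and then take a supremum, any $\lambda < b$; but the cleanest route is to prove $(\mathbb C^n, b\cdot V(f))$ is log canonical at $\bm 0$ directly) and show $(\mathbb C^n, b\cdot V(f))$ is log canonical at $\bm 0$. Pull back by $\varphi$: by the discrepancy computation the coefficient of $E$ in the log pullback of $b\cdot V(f)$ is exactly $1$, which is allowed. So it remains to check that, on a log resolution factoring through $\varphi$, no \emph{other} exceptional divisor acquires coefficient $>1$; equivalently, that $(W, K_\varphi + b\cdot\varphi^*V(f))$ is log canonical along $E$. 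The key point is that the strict transform of $V(f)$ meets $E$ along the same locus where $V(f_{\bm w})$ degenerates: in the chart description of the weighted blowup, $\varphi^*f$ factors as $(\text{equation of }E)^{\operatorname{wt}_{\bm w}(f)}\cdot \tilde f$ with $\tilde f|_E$ given, up to the natural identification of $E$ with the weighted projective space $\mathbb P(w_1,\dots,w_n)$ (or its quotient presentation), by $f_{\bm w}$. Hence the singularities of the pair near $E$ away from the locus $C$ (which is exactly the locus over which the blowup has the residual $\mathbb C^m\times\{\bm 0\}$ behaviour, where $w_i=0$) are measured by the pair $(\mathbb C^n, b\cdot V(f_{\bm w}))$, and by hypothesis that pair is log canonical outside $C$. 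One then checks that along $C\cap E$ nothing worse happens: there $\varphi$ is, up to a smooth factor $\mathbb A^m$, an isomorphism onto a neighbourhood minus $\bm 0$ in the remaining variables, so log canonicity there reduces to log canonicity of the original pair away from $\bm 0$, which holds by induction on dimension / by the fact that $\bm 0$ is the only point whose log canonicity is in question.

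The main obstacle — and the place where the generalisation over \cite[Proposition~8.14]{Kol97} really bites — is the non-negative-but-not-positive weights, i.e.\ handling the divisor $C$ and the product structure $\mathbb C^n = \mathbb C^m\times\mathbb C^{n-m}$ from the Remark preceding the statement. With some $w_i=0$, the exceptional divisor $E$ is not a weighted projective space but a product $\mathbb C^m\times\mathbb P(w_j : w_j>0)$, and $E$ can fail to be $\mathbb Q$-Cartier or the pair can be non-log-canonical along the ``vertical'' locus $C$; that is precisely why the hypothesis only demands log canonicity of $(\mathbb C^n, b\cdot V(f_{\bm w}))$ \emph{outside} $C$. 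So I would carefully set up the chart computation of $\varphi$ respecting the splitting, verify the factorisation of $\varphi^*f$ and the identification of $\tilde f|_E$ with $f_{\bm w}$ in each chart, and argue that log canonicity of the pulled-back pair is a local condition on $E$ that, away from $C$, is equivalent to that of $(\mathbb C^n, b\cdot V(f_{\bm w}))$, while along $C$ it follows from the pair being log canonical on the punctured neighbourhood of $\bm 0$ (which we may assume, shrinking $X$, since otherwise $\operatorname{lct}_{\bm 0}$ is determined by a divisor not lying over $\bm 0$ and there is nothing to prove). Patching these local statements via a common log resolution through $\varphi$ and invoking \cref{def:lct} then gives $(\mathbb C^n, b\cdot V(f))$ log canonical at $\bm 0$, hence $\operatorname{lct}_{\bm 0}(f)\ge b$, and combined with the first part, $\operatorname{lct}_{\bm 0}(f)=b$.
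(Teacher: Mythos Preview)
Your upper-bound argument is correct and matches the paper's: rescale, take the weighted blowup, read off the coefficient of $E$ from \cref{thm:discrepancy}.

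The equality argument, however, has genuine gaps.

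First, a geometric confusion: $E = \varphi^{-1}(C)$, so phrases like ``near $E$ away from $C$'' and ``along $C\cap E$'' do not parse; $C$ lives in $\mathbb C^n$, $E$ in $W$, and $\varphi$ collapses $E$ onto $C$. Your claim that log canonicity of the pulled-back pair near $E$ is ``equivalent to that of $(\mathbb C^n, b\cdot V(f_{\bm w}))$'' away from $C$ is not right as stated: the isomorphism $W\setminus E \cong \mathbb C^n\setminus C$ tells you about the pair \emph{away} from $E$, not near it. What you actually need is that the pair $(E, b\cdot V_E(f_{\bm w}))$ is log canonical, and this does not follow from a chart computation alone. The paper bridges this by constructing, for each $i$ with $w_i>0$, an explicit biholomorphism $(\mathbb C^n\setminus V(x_i))/\mu_{w_i}\cong (E\setminus V(x_i))\times(\mathbb C\setminus\{0\})$ compatible with $f_{\bm w}$, and then invoking \cite[Theorem~8.12]{Kol97} to descend log canonicity through the finite quotient. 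Once $(E, b\cdot V_E(f_{\bm w}))$ is log canonical, one still needs inversion of adjunction \cite[Theorem~7.5]{Kol97} to conclude that the log pullback of $b\cdot V(f_{\bm w})$ is log canonical on $W$ near $E$, and then \cite[Lemma~3.10.2]{Kol97} to pass from $f_{\bm w}$ to $f$. None of these three steps is visible in your sketch.

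Second, you miss the case where exactly one weight is positive. Then the weighted blowup as defined in the paper is the identity (the blowup of $\mathbb A^1$ at a point is an isomorphism), there is no exceptional divisor, and \cref{thm:discrepancy} does not apply. The paper treats this case separately: $1/b$ is then simply the coefficient of $V(x_{i_0})$ in $V(f)$, and equality is obtained by a direct application of inversion of adjunction to the hyperplane $V(x_{i_0})$.
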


\begin{proof}
After scaling by a suitable positive rational number, the numbers $w_1, \ldots, w_n$ are non-negative integers with $\gcd(w_1, \ldots, w_n) = 1$.

If there is exactly one index $i_0 \in \{1, \ldots, n\}$ such that $w_{i_0}$ is positive, then $\operatorname{wt}(f) = 1/b$ is the coefficient of the prime divisor $V(x_{i_0})$ in the divisor~$V(f)$. Therefore, $\operatorname{lct}_{\bm 0}(f) \leq b$. Now, let $b$ be finite and let $(\mathbb C^n, b \cdot V(f_{\bm w}))$ be log canonical away from $V(x_{i_0})$. The complex space $V_{\mathbb C^n}(f_{\bm w}) \setminus V_{\mathbb C^n}(x_{i_0})$ is biholomorphic to $(\mathbb C^1 \setminus \{0\}) \times V_{\mathbb C^{n-1}}(f_{\bm w} / x_{i_0}^{1/b})$. Therefore, $(\mathbb C^{n-1}, b \cdot V(f_{\bm w} / x_{i_0}^{1/b}))$ is log canonical. Note that $V(x_{i_0}) + b \cdot V(f / x_{i_0}^{1/b})$ is precisely the divisor $b \cdot V(f)$ and that the restriction of $b \cdot V(f / x_{i_0}^{1/b})$ to $V(x_{i_0})$ is $b \cdot V(f_{\bm w} / x_{i_0}^{1/b})$. By inversion of adjunction (\cite[Theorem~7.5]{Kol97}), $(\mathbb C^n, b \cdot V(f))$ is log canonical. Therefore, $\operatorname{lct}_{\bm 0}(f) = b$.

Below we consider the case where there are at least two positive weights among~$\bm w$. Let $\varphi\colon W \to \mathbb C^n$ be the $\bm w$-weighted blowup and $E$ its exceptional divisor. For every $\lambda \in \mathbb Q_{>0}$, by \cref{thm:discrepancy}, the coefficient of $E$ in the log pullback of $\lambda V(f)$ is
\[
1 + \lambda \operatorname{wt}_{\bm w}(f) - \sum w_i.
\]
Therefore, $\operatorname{lct}_{\bm 0}(f) \leq b$.

Now, let $b$ be finite and let $(\mathbb C^n, b \cdot V(f_{\bm w}))$ be log canonical outside~$C$. We show that $(E, b \cdot V(f_{\bm w}))$ is log canonical. For every $i \in \{1, \ldots, n\}$ such that $w_i$ is positive, let the group $\mu_{w_i}$ of $w_i$-th roots of unity act on $\mathbb C^n$ by $\xi \cdot (x_1, \ldots, x_n) = (\xi^{w_1} x_1, \ldots, \xi^{w_n} x_n)$, where $\xi$ is a primitive $w_i$-th root of unity. Let
\[
p\colon \mathbb C^n \setminus V(x_i) \to \frac{\mathbb C^n \setminus V(x_i)}{\mu_{w_i}}
\]
be the natural holomorphism. By \cite[Theorem~8.12]{Kol97},
\[
\mleft(\frac{\mathbb C^n \setminus V(x_i)}{\mu_{w_i}}, V(f_{\bm w})\mright)
\]
is log canonical. For every monomial $M$ in the $\mu_{w_i}$-invariant $\mathbb C$-algebra $\mathbb C[x_0, \ldots, x_n, x_i^{-1}]^{\mu_{w_i}}$, there exists an integer $k_M$ such that $\operatorname{wt}_{\bm w}(M) = k_M w_i$. The $\mathbb C$-algebra isomorphism
\[
\begin{aligned}
  \mathbb C[x_0, \ldots, x_n, x_i^{-1}]^{\mu_{w_i}} & \to \mathbb C[x_1, \ldots, x_n, x_i^{-1}]_0 \otimes_{\mathbb C} \mathbb C[y, y^{-1}]\\
  x_i & \mapsto y\\
  M & \mapsto y^{k_M} x_i^{-k_M} M
\end{aligned}
\]
induces a biholomorphism
\[
\frac{\mathbb C^n \setminus V(x_i)}{\mu_{w_i}} \cong (E \setminus V(x_i)) \times (\mathbb C^1 \setminus \{0\})
\]
which takes $V(f_{\bm w}) \subseteq (\mathbb C^n \setminus V(x_i))/(\mu_{w_i})$ to $V_{E \setminus V(x_i)}(f_{\bm w}) \times (\mathbb C^1 \setminus \{\mathrm{pt}\})$. Since log canonicity is a local analytic property (\cite[Proposition~4.4.4]{Mat02}) and since taking a product with a smooth complex space does not change discrepancies, $(E, b \cdot V(f_{\bm w}))$ is log canonical.

We show that $\operatorname{lct}_{\bm 0}(\mathbb C^n, V(f)) = b$. Since the exceptional divisor $E$ of the weighted blowup $\varphi\colon W \to \mathbb C^n$ is the product of an affine space and a weighted projective space, $E$ is log terminal. Let $D_{\bm w}'$ be the log pullback of~$b \cdot V(f_{\bm w})$. By inversion of adjunction (\cite[Theorem~7.5]{Kol97}), since $(E, b \cdot V(f_{\bm w}))$ is log canonical, $(W, D_{\bm w}')$ is log canonical near~$E$. By \cite[Lemma~3.10.2]{Kol97}, $(\mathbb C^n, b \cdot V(f))$ is log canonical at~$\bm 0$. Therefore, $\operatorname{lct}_{\bm 0}(f) = b$.
\end{proof}

\cref{exa:counter-example smooth,exa:counter-example Ak} show that the number $b$ in \cref{thm:weighted formula power series} can be greater than~$1$.

\begin{example} \label{exa:counter-example smooth}
Let $f := x + y^d$, where $d$ is any positive integer. Since $f$ is smooth at $\bm 0$, $\operatorname{lct}_{\bm 0}(f) = 1$. On the other hand, choosing weights $(1, 1/d)$, we find $b = (d+1)/d > 1$.
\end{example}

\begin{example} \label{exa:counter-example Ak}
Let $f = x_1^2 + \ldots + x_{n-1}^2 + x_n^{k+1}$, where $n \geq 3$ and $k \geq 1$. If $n = 3$, then $f$ defines a canonical surface singularity, and if $n \geq 4$, then $f$ defines a terminal $(n-1)$-fold singularity. By inversion of adjunction (\cite[Theorem~7.5]{Kol97}), $\operatorname{lct}_{\bm 0}(f) = 1$. On the other hand, choosing weights $(1/2, \ldots, 1/2, 1/(k+1))$, we find $b = (n-1)/2 + 1/(k+1) > 1$.
\end{example}

\subsection{Choosing a coordinate system}

In most coordinate systems, the Newton polyhedron gives very little information about the power series. Namely, for every non-zero non-unit power series, after a general linear coordinate change, its Newton polyhedron has exactly one compact facet, and that compact facet has a normal vector $(1, \ldots, 1)$. \Cref{thm:normalised Newton polyhedron} describes a natural coordinate change such that the Newton polyhedron is interesting for our purposes. In this case, we can read off the log canonical threshold from the Newton polyhedron by \cref{thm:reading lct}.

\Cref{def:normalised} is \cite[Definition~3.24]{BMP20} generalised to the situation where the power series is not necessarily right equivalent to a Newton non-degenerate power series. It coincides with \cite[Definition~3.24]{BMP20} if the power series is right equivalent to a Newton non-degenerate one. In \cref{thm:reading lct}, we only need a weaker condition which we call \emph{weakly normalised with respect to~$\Delta$}.

\begin{definition} \label{def:normalised}
Let $f \in \mathbb C[[x, y]]$ be non-zero. Let $\Delta$ be a (not necessarily compact) facet of the Newton polyhedron of~$f$. Let $\bm w = (w_x, w_y)$ be the unique non-negative integers with $\gcd(w_x, w_y) = 1$ such that $\bm w$ is normal vector of~$\Delta$. Let $f_{\bm w}$ be the $\bm w$-weighted homogeneous leading term of~$f$. Let $a, b$ be the greatest non-negative integers such that $f_{\bm w} / (x^a y^b)$ is a power series. Let $d$ be the greatest non-negative integer such that an irreducible power series to the power $d$ divides $\operatorname{sat}(f_{\bm w})$. We say that $f$ is \textbf{weakly normalised with respect to~$\Delta$} if one of the following holds:
\begin{enumerate}[label=(W\arabic*), ref=W\arabic*, leftmargin=3.5em]
\item $w_x = 0$ or $w_y = 0$,
\item $d \leq \max(a, b)$, or
\item $w_x > 1$ and $w_y > 1$.
\end{enumerate}
We say $f$ is \textbf{normalised with respect to~$\Delta$} if one of the following holds:
\begin{enumerate}[label=(N\arabic*), ref=N\arabic*, leftmargin=3.5em]
\item $w_x = 0$ or $w_y = 0$,
\item $w_x = 1$ and $w_y = 1$ and $d \leq \min(a, b)$,
\item $w_x = 1$ and $w_y > 1$ and $d \leq b$,
\item $w_x > 1$ and $w_y = 1$ and $d \leq a$, or
\item $w_x > 1$ and $w_y > 1$.
\end{enumerate}
\end{definition}

\begin{proposition} \label{thm:normalised Newton polyhedron}
Every formal power series $f \in \mathbb C[[x, y]]$ is formally right equivalent to a formal power series $g \in \mathbb C[[x, y]]$ which is normalised with respect to each compact facet of its Newton polyhedron. If $f \in \{x, y\}$ defines an isolated singularity, then there exists a right equivalence between $f$ and a polynomial $g \in [x, y]$ which is normalised with respect to each compact facet of its Newton polyhedron.
\end{proposition}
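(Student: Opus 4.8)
The plan is to extend \cite[Algorithm~4]{BMP20} to a procedure of coordinate changes, carried out facet by facet, and to prove that it yields — as a formal limit, if it does not terminate — a power series normalised with respect to every compact facet.

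I would first analyse a single facet. The Newton polyhedron of $f$ has finitely many compact facets, and the primitive normal vector $\bm w = (w_x, w_y)$ of such a facet $\Delta$ satisfies $w_x, w_y \geq 1$. If $w_x \geq 2$ and $w_y \geq 2$, then $f$ is normalised with respect to $\Delta$ by the last clause of \cref{def:normalised}, so it suffices to treat the facets with $\bm w \in \{(1,1)\} \cup \{(1, w_y) : w_y \geq 2\} \cup \{(w_x, 1) : w_x \geq 2\}$; for these, non-normalisation means exactly that $\operatorname{sat}(f_{\bm w})$ has an irreducible factor $p$ of multiplicity $d$ exceeding the relevant coordinate exponent. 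Since $f_{\bm w}$ is $\bm w$-weighted-homogeneous, so is $p$, and being irreducible and not a coordinate it has the shape $y - c x^{w_y}$ (if $\bm w = (1, w_y)$, $c \neq 0$), $x - c y^{w_x}$ (if $\bm w = (w_x, 1)$, $c \neq 0$), or a non-coordinate linear form (if $\bm w = (1,1)$). The basic move at $\Delta$ is the coordinate change turning $p$ into a coordinate: $y \mapsto y + c x^{w_y}$, respectively $x \mapsto x + c y^{w_x}$, respectively the linear change taking $\{p = 0\}$ to a coordinate axis — for $\bm w = (1,1)$ we absorb $p$ into whichever coordinate has the smaller exponent among $a$ and $b$. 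Writing $f_{\bm w} = (\mathrm{unit}) \cdot x^a y^b \prod_k \ell_k^{e_k}$ as a product of $\bm w$-weighted-homogeneous irreducibles, a short computation shows that the basic move preserves $\operatorname{wt}_{\bm w}(f)$ and carries $f_{\bm w}$ to its image under the same substitution; from this I would deduce that one basic move normalises $\Delta$ when $\max(w_x, w_y) \geq 2$ (or collapses $\Delta$ to a vertex of the Newton polyhedron), while for $\bm w = (1,1)$ each basic move strictly increases $a + b$, which is bounded above by $\operatorname{wt}_{(1,1)}(f) - 1$, so finitely many basic moves normalise the $(1,1)$-facet.

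Next I would organise the global procedure: process the compact facets in order of increasing $\max(w_x, w_y)$, applying basic moves until each is normalised. The crux is that the basic move at a facet with $\max(w_x, w_y) = M$ is a $\mathbb C$-algebra automorphism congruent to the identity modulo $\mathfrak m^M$, where $\mathfrak m = (x, y)$, and linear when $M = 1$; a weight comparison shows that it fixes $f_{\bm w'}$ for every $\bm w'$ with $\max(\bm w') < M$. Hence it leaves all previously treated facets normalised and creates no new compact facet whose normal has maximum less than $M$, and the (at most two) facets of a given maximum do not interfere — so the procedure is well-defined, spending finitely many basic moves at each value of $M$. If at some stage no compact facet of maximum greater than the current $M$ remains, the procedure stops, having done finitely many coordinate changes whose composite is an automorphism of $\mathbb C[[x, y]]$, and of $\mathbb C\{x, y\}$ when $f$ is convergent, and thereby producing a normalised power series. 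Otherwise the procedure never stops, but the basic moves at maximum $M$ are congruent to the identity modulo $\mathfrak m^M$, so the composite of all basic moves converges $\mathfrak m$-adically to a formal automorphism $\Phi$; since for each $\bm w$ the leading form stabilises once all facets of maximum $\leq \max(\bm w)$ have been treated, every compact facet of $\Phi(f)$ carries the normalised leading form we arranged, which proves the first assertion.

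For the second assertion, if $f \in \mathbb C\{x, y\}$ defines an isolated singularity, then finite determinacy (\cite{GLS07}) gives a right equivalence between $f$ and a polynomial, to which I would apply the procedure and argue — as in \cite{BMP20} — that it now terminates after finitely many basic moves; the composite coordinate change is then a polynomial automorphism and the output a polynomial normalised with respect to each of its compact facets. I expect the main obstacle to be the bookkeeping of the third paragraph, namely verifying carefully that the sweep by $\max(w_x, w_y)$ makes every basic move harmless to the facets already normalised and that, in the non-terminating case, each compact facet of the formal limit is fixed at a finite stage, together with the termination argument in the isolated case, which amounts to converting finite determinacy into an effective bound on the number of basic moves.
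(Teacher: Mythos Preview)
Your proposal is correct and follows essentially the same approach as the paper: the paper's proof simply invokes lines 1--28 and 43 of \cite[Algorithm~4]{BMP20} for the formal statement and then finite determinacy (\cite[Corollary~2.24]{GLS07}) for the isolated case, whereas you unpack that very algorithm --- the facet-by-facet basic moves $y\mapsto y+cx^{w_y}$ (resp.\ $x\mapsto x+cy^{w_x}$), the sweep by increasing $\max(w_x,w_y)$, and the $\mathfrak m$-adic convergence --- and then likewise appeal to finite determinacy. The extra detail you supply (non-interference of earlier facets, stabilisation of leading forms, termination at the $(1,1)$-facet via growth of $a+b$) is exactly the content hidden behind the paper's citation, so there is no genuine divergence in strategy.
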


\begin{proof}
We get a formal power series by lines 1--28 and 43 of \cite[Algorithm~4]{BMP20} (skipping lines 29--42). If $f$ defines an isolated singularity, then $f$ is $(\mu+1)$-determined (\cite[Corollary~2.24]{GLS07}) where $\mu$ is its local Milnor number at the origin. Therefore, we have a right equivalence to a polynomial $g$ of degree $\mu+1$ which is normalised with respect to each compact facet of its Newton polyhedron.
\end{proof}

\begin{remark} \label{rem:if equiv to NND}
\begin{enumerate}[label=(\alph*), ref=\alph*]
\item By \cite[Algorithm~4]{BMP20} lines 1--28 and~43, if $f \in \mathbb C\{x, y\}$ is right equivalent to a Newton non-degenerate power series and $f$ is normalised with respect to each compact facet of its Newton polyhedron, then $f$ is Newton non-degenerate.
\item \label{itm:unique polyhedron} By \cite[Remarks 3.27 and~3.28]{BMP20}, if $f \in \mathbb C\{x, y\}$ is right equivalent to a Newton non-degenerate power series, then we can associate a unique, up to reflection, polyhedron to the right equivalence class of~$f$, namely the Newton polyhedron of any power series in the right equivalence class of $f$ which is normalised with respect to each compact facet of its Newton polyhedron.
\end{enumerate}
\end{remark}

If $f \in \mathbb C\{x, y\}$ is not right equivalent to a Newton non-degenerate power series, then we do not have such uniqueness as in \cref{rem:if equiv to NND}\labelcref{itm:unique polyhedron}:

\begin{example} \label{exa:two normalised Newton polyhedrons}
The right equivalent polynomials
\[
\begin{aligned}
  f & := x^2 y^2 (x+y)^2 + x^9 + y^7,\\
  g & := x^2 y^2 (x+y)^2 + x^9 - (x+y)^7
\end{aligned}
\]
are both normalised with respect to each facet of their Newton polyhedrons, but the Newton polyhedron of $f$ is strictly contained in the Newton polyhedron of~$g$. The local Milnor number of $f$ at the origin is $30$ and by \cref{thm:reading lct} the log canonical threshold of $f$ at the origin is $1/3$.
\end{example}

\begin{figure}[h]
  \centering
  \begin{tikzpicture}[scale=0.5]
    \coordinate (up) at (0, 0.08);
    \draw[->] (0,0) -- (10,0);
    \draw[->] (0,0) -- (0,8);
    \draw[blue, thick] ($(0,7)+(up)$) -- ($(2,4)+(up)$) -- ($(4,2)+(up)$) -- (9,0);
    \draw[red, thick] (0,7) -- (2,4) -- (4,2) -- (7,0);
    \draw (0,7) node[right] {$(0,7)$};
    \draw (2,4) node[above right] {$(2,4)$};
    \draw (4,2) node[above right] {$(4,2)$};
    \draw (7,0) node[below] {$(7,0)$};
    \draw (9,0) node[above right] {$(9,0)$};
    \draw (0,8) node[below left] {$y$};
    \draw (10,0) node[below left] {$x$};
    \node[draw, circle, inner sep=1pt, fill] at (0,7) {};
    \node[draw, circle, inner sep=1pt, fill] at (2,4) {};
    \node[draw, circle, inner sep=1pt, fill] at (4,2) {};
    \node[draw, circle, inner sep=1pt, fill] at (7,0) {};
    \node[draw, circle, inner sep=1pt, fill] at (9,0) {};
    \matrix [draw,below left] at (current bounding box.north east) {
      \node [draw=blue, shape=circle, fill=blue, inner sep=1pt, label=right : $f$] {}; \\
      \node [draw=red, shape=circle, fill=red, inner sep=1pt, label=right : $g$] {}; \\
    };
  \end{tikzpicture}
  \caption{Compact faces of the Newton polyhedrons in \cref{exa:two normalised Newton polyhedrons}.}
\end{figure}
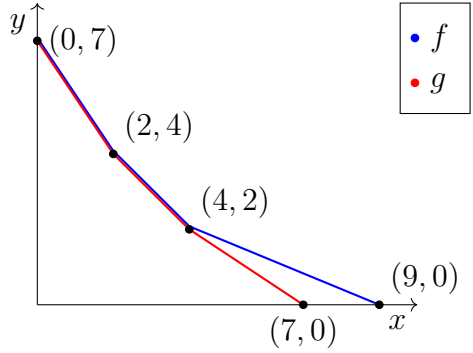

With more care, we can also construct an example where the Newton polyhedrons are not contained in one another:

\begin{example} \label{exa:minimal Newton polyhedrons}
Let the set of polyhedrons in $\mathbb R^2$ have a partial order ``$\leq$'' given by $N_1 \leq N_2$ if and only if $N_1 \subseteq N_2$ or $r(N_1) \subseteq N_2$, where $r$ is the reflection with respect to the line passing through the origin and $(1,1)$. Define
\[
f := \bigl(x y (x + y)\bigr)^7 + (x y)^4 (x + y)^6 (x^8 + y^8) + x y (x^{22} + y^{22}).
\]
Let $\Phi$ be the $\mathbb C$-algebra automorphism of $\mathbb C\{x, y\}$ given by $\Phi\colon x \mapsto x$, $y \mapsto -x-y$. Then $f$ and $\Phi(f)$ are both normalised with respect to each compact facet of their Newton polyhedrons but the Newton polyhedrons of $f$ and $\Phi(f)$ are incomparable. Moreover, the Newton polyhedrons of $f$ and $\Phi(f)$ are precisely all the minimal Newton polyhedrons, up to reflection, in the formal right equivalence class of~$f$. The local Milnor number of $f$ at the origin is $454$ and by \cref{thm:reading lct} the log canonical threshold of $f$ at the origin is $2/21$.
\end{example}

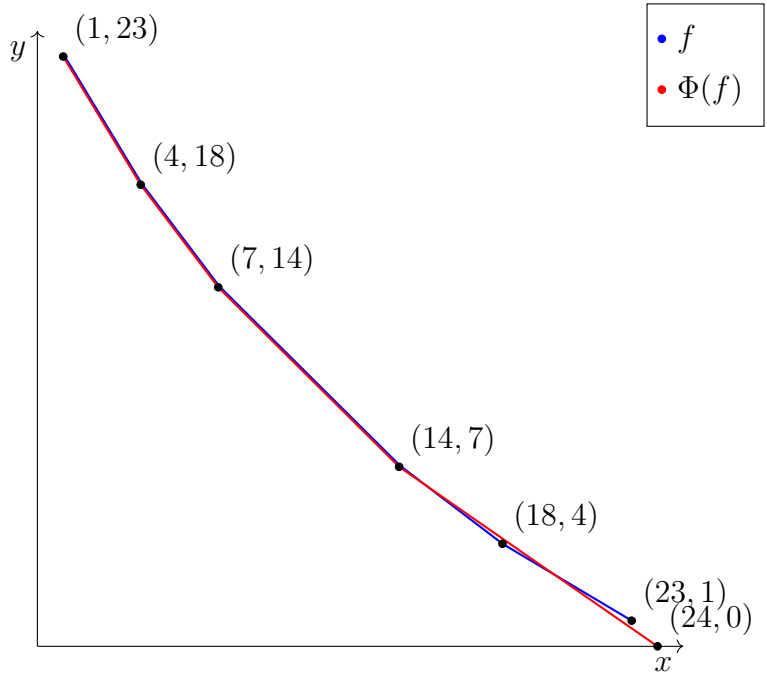
\begin{figure}[h]
  \centering
  \begin{tikzpicture}[scale=0.34]
    \coordinate (up) at (0.04, 0.04);
    \draw[->] (0,0) -- (0,24);
    \draw[->] (0,0) -- (25,0);
    \draw[blue, thick] ($(1,23)+(up)$) -- ($(4,18)+(up)$) -- ($(7,14)+(up)$) -- ($(14,7)+(up)$) -- (18,4) -- (23,1);
    \draw[red, thick] (1,23) -- (4,18) -- (7,14) -- (14,7) -- (24,0);
    \draw (1,23) node[above right] {$(1,23)$};
    \draw (4,18) node[above right] {$(4,18)$};
    \draw (7,14) node[above right] {$(7,14)$};
    \draw (14,7) node[above right] {$(14,7)$};
    \draw (18,4) node[above right] {$(18,4)$};
    \draw (23,1) node[above right] {$(23,1)$};
    \draw (24,0) node[above right] {$(24,0)$};
    \draw (0,24) node[below left] {$y$};
    \draw (25,0) node[below left] {$x$};
    \node[draw, circle, inner sep=1pt, fill] at (1,23) {};
    \node[draw, circle, inner sep=1pt, fill] at (4,18) {};
    \node[draw, circle, inner sep=1pt, fill] at (7,14) {};
    \node[draw, circle, inner sep=1pt, fill] at (14,7) {};
    \node[draw, circle, inner sep=1pt, fill] at (18,4) {};
    \node[draw, circle, inner sep=1pt, fill] at (23,1) {};
    \node[draw, circle, inner sep=1pt, fill] at (24,0) {};
    \matrix [draw,below left] at (current bounding box.north east) {
      \node [draw=blue, shape=circle, fill=blue, inner sep=1pt, label=right : $f$] {}; \\
      \node [draw=red, shape=circle, fill=red, inner sep=1pt, label=right : $\Phi(f)$] {}; \\
    };
  \end{tikzpicture}
  \caption{Compact faces of the Newton polyhedrons in \cref{exa:minimal Newton polyhedrons}.}
\end{figure}

\subsection{Reading the log canonical threshold from the Newton polyhedron} \label{sec:lct from Newton polyhedron}

\begin{theorem} \label{thm:reading lct}
\MainTheorem{}
\end{theorem}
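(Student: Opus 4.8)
The plan is to derive both conclusions from \cref{thm:weighted formula power series} by a good choice of weight vector. For the inequality I would take $\bm w = (w_1, \dots, w_n) \in \mathbb Z_{\ge 0}^n$ to be the primitive normal vector of the facet $\Delta$; its entries are non-negative because the Newton polyhedron is stable under translation by $\mathbb R_{\ge 0}^n$, and some may vanish if $\Delta$ is non-compact. The supporting hyperplane of $\Delta$ is $\{\bm t \in \mathbb R^n : \sum_i w_i t_i = \operatorname{wt}_{\bm w}(f)\}$, and since it contains $(c, \dots, c)$ we get $\operatorname{wt}_{\bm w}(f) = c \sum_i w_i$, hence $\sum_i w_i / \operatorname{wt}_{\bm w}(f) = 1/c$; the first assertion of \cref{thm:weighted formula power series} then gives $\operatorname{lct}_{\bm 0}(f) \le 1/c$. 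For the equality in the case $n = 2$ it remains to prove $\operatorname{lct}_{\bm 0}(f) \ge 1/c$, and here I would distinguish according to whether $(c, c)$ is a vertex of the Newton polyhedron.

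First, suppose $(c, c)$ lies in two facets $\Delta_1, \Delta_2$, i.e.\ is a vertex. Since the extreme points of $\operatorname{conv}(S) + \mathbb R_{\ge 0}^2$ all belong to $S$, the monomial $x^c y^c$ occurs in $f$ with non-zero coefficient; in particular $c \in \mathbb Z_{>0}$. I would set $\bm w := \bm{w}^{(1)} + \bm{w}^{(2)} \in \mathbb Z_{>0}^2$, where $\bm{w}^{(i)}$ is the primitive normal of $\Delta_i$ (both entries are positive because two distinct facets cannot share a normal direction); then $\bm w$ lies in the relative interior of the normal cone of the Newton polyhedron at the vertex $(c, c)$, so $\operatorname{wt}_{\bm w}(f)$ is attained only at $(c, c)$ and $f_{\bm w}$ is a non-zero scalar multiple of $x^c y^c$. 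As above $\sum_i w_i / \operatorname{wt}_{\bm w}(f) = 1/c$, and $\tfrac1c V(f_{\bm w}) = V(x) + V(y)$ is snc, so $(\mathbb C^2, \tfrac1c V(f_{\bm w}))$ is log canonical everywhere, in particular outside $C$; the second assertion of \cref{thm:weighted formula power series} gives $\operatorname{lct}_{\bm 0}(f) = 1/c$.

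Now suppose $(c, c)$ lies in the relative interior of a single facet $\Delta$ with primitive normal $\bm w = (w_x, w_y)$, and that $f$ is weakly normalised with respect to $\Delta$; again $\sum_i w_i / \operatorname{wt}_{\bm w}(f) = 1/c$. If $w_x = 0$ or $w_y = 0$ — which is forced when $\Delta$ is non-compact — say $\bm w = (1, 0)$, then $\Delta$ is the ``vertical'' facet, $c \in \mathbb Z_{>0}$ is the multiplicity of $V(x)$ in $V(f)$, and writing $f = x^c g$ with $x \nmid g$, the hypothesis that $(c, c)$ lies in the relative interior of $\Delta$ becomes $\operatorname{ord}_y g(0, y) < c$. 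This is the step I expect to be the main obstacle: the ``moreover'' hypothesis of \cref{thm:weighted formula power series} can genuinely fail here (for instance if $g$ has a zero of order greater than $c$ away from the coordinate axes), so instead I would invoke inversion of adjunction (\cite[Theorem~7.5]{Kol97}) for the smooth prime divisor $V(x)$: the pair $(\mathbb C^2, \tfrac1c V(f)) = (\mathbb C^2, V(x) + \tfrac1c V(g))$ is log canonical at $\bm 0$ if and only if $(V(x), \tfrac1c V(g)|_{V(x)})$ is, and the latter holds because the coefficient of $\bm 0$ in the restricted divisor is $\tfrac1c \operatorname{ord}_y g(0, y) < 1$. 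Hence $\operatorname{lct}_{\bm 0}(f) \ge 1/c$, and so $\operatorname{lct}_{\bm 0}(f) = 1/c$.

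In the remaining clauses (W2), (W3) the facet $\Delta$ is compact, so $w_x, w_y \ge 1$ and $C = \{\bm 0\}$. Every $\bm w$-weighted-homogeneous polynomial factors as $x^a y^b \prod_k (\beta_k x^{w_y} - \gamma_k y^{w_x})^{m_k}$ with all $\beta_k, \gamma_k$ non-zero and the $[\beta_k : \gamma_k]$ pairwise distinct, the factors being irreducible because $\gcd(w_x, w_y) = 1$; applying this to $f_{\bm w}$, the exponents $a, b$ and $d := \max_k m_k$ are precisely the quantities of \cref{def:normalised}, and $\tfrac1c V(f_{\bm w}) = \tfrac ac V(x) + \tfrac bc V(y) + \sum_k \tfrac{m_k}{c} V(\beta_k x^{w_y} - \gamma_k y^{w_x})$. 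Away from $\bm 0$ each component of this divisor is smooth and any two of them meet only at $\bm 0$, so $(\mathbb C^2, \tfrac1c V(f_{\bm w}))$ is log canonical outside $C$ as soon as $a \le c$, $b \le c$ and $m_k \le c$ for every $k$. The relative-interior hypothesis gives $a, b < c$, so only $d \le c$ remains to be checked. In clause (W2) this is immediate from $d \le \max(a, b) < c$. In clause (W3), where $w_x, w_y \ge 2$, the identity $\operatorname{wt}_{\bm w}(f) = a w_x + b w_y + w_x w_y \sum_k m_k$ gives $w_x w_y \sum_k m_k \le \operatorname{wt}_{\bm w}(f)$, whence $d \le \sum_k m_k \le \operatorname{wt}_{\bm w}(f)/(w_x w_y) \le \operatorname{wt}_{\bm w}(f)/(w_x + w_y) = c$, using $(w_x - 1)(w_y - 1) \ge 1$. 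In either clause the second assertion of \cref{thm:weighted formula power series} then gives $\operatorname{lct}_{\bm 0}(f) = 1/c$. Apart from the detour through inversion of adjunction in clause (W1), the remaining work is the factorisation of binary weighted-homogeneous forms and routine bookkeeping matching $a$, $b$, $d$ to the geometry of $\Delta$.
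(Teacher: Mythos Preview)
Your proof is correct and follows the same strategy as the paper: apply \cref{thm:weighted formula power series} with $\bm w$ normal to $\Delta$, and for the equality check that every irreducible component of $V(f_{\bm w})$ has multiplicity at most~$c$. Two remarks.

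Your detour in clause (W1) is unnecessary and rests on a misreading: the ``moreover'' hypothesis of \cref{thm:weighted formula power series} concerns $f_{\bm w}$, not $f$. With $\bm w = (1,0)$ one has $f_{\bm w} = x^c\, g(0,y) = u\, x^c y^k$ for a unit $u$ and $k = \operatorname{ord}_y g(0,y) \le c$, so $\tfrac1c V(f_{\bm w}) = V(x) + \tfrac{k}{c} V(y)$ is log canonical outside $C = V(x)$ and \cref{thm:weighted formula power series} applies directly; zeros of $g$ away from the axes live in $V(f)$, not in $V(f_{\bm w})$. Your inversion-of-adjunction argument is nonetheless valid --- it is essentially what the proof of \cref{thm:weighted formula power series} does internally in the one-positive-weight case --- so there is no gap.

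In clause (W3) your bound $d \le \sum_k m_k \le \operatorname{wt}_{\bm w}(f)/(w_x w_y) \le \operatorname{wt}_{\bm w}(f)/(w_x + w_y) = c$ via $(w_x-1)(w_y-1) \ge 1$ is a bit slicker than the paper's argument, which instead arranges $w_x < w_y$, observes that each irreducible factor contributes $y$-degree $w_x \ge 2$ to $\operatorname{sat}(f_{\bm w})(0,y)$, and then bounds the $y$-intercept of the line through $\Delta$ by $2c$ using the slope.
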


\begin{proof}
Let $\Delta$ be any (not necessarily compact) facet of the Newton polyhedron of $f$ that contains the point $(c, \ldots, c)$. Let $\bm w \in \mathbb Z_{\geq0}^n$ be the unique normal vector of~$\Delta$ such that $\gcd(w_1, \ldots, w_n) = 1$. Let $f_{\bm w}$ denote the $\bm w$-weighted-homogeneous leading term of~$f$. Then, $\operatorname{wt}_{\bm w}(f) / \sum w_i = c$. By \cref{thm:weighted formula power series}, $\operatorname{lct}_{\bm 0}(f) \leq 1/c$.

Now, let $n = 2$ and let $f$ be weakly normalised with respect to~$\Delta$. We use \cref{thm:weighted formula power series} to prove that $\operatorname{lct}_{\bm 0}(f) = 1/c$. For this, we need to show that the pair $(\mathbb C^2, \frac{1}{c} V(f_{\bm w}))$ is log canonical outside the origin, or equivalently that all the irreducible components of $V(f_{\bm w})$ have multiplicity less than or equal to~$c$.

If $w_2 = 0$, then $f_{\bm w} = u x_1^c x_2^k$ for some unit $u \in \mathbb C\{x_2\}$ and non-negative integer~$k \leq c$, showing that all the irreducible components of $V(f_{\bm w})$ have multiplicity less than or equal to~$c$. Similarly in the case $w_1 = 0$.

Below, we consider the case where $w_1$ and $w_2$ are both positive. Let $(P_1, P_2)$ and $(Q_1, Q_2)$ be the two extreme points of~$\Delta$, where $P_1 < Q_1$ and $Q_2 < P_2$. Since $\Delta$ contains the point $(c, c)$, we necessarily have $P_1 \leq c$ and $Q_2 \leq c$. Let $d$ be the greatest integer such that an irreducible power series $g$ to the power $d$ divides the saturation $\operatorname{sat}(f_{\bm w})$ of~$f_{\bm w}$. To show that all the irreducible components of $V(f_{\bm w})$ have multiplicity less than or equal to~$c$, it suffices to prove that $d \leq c$.

If $w_1 = 1$ or $w_2 = 1$, then since $f$ is weakly normalised with respect to~$\Delta$, we have $d \leq \max(P_1, Q_2) \leq c$. Otherwise, after possibly permuting $x_1$ and $x_2$, we have $1 < w_1 < w_2$. Therefore, $\deg g(0, x_2) \geq 2$. Since $g^d \mid \operatorname{sat}(f)$, we find that
\[
g(0, x_2)^d \mid \operatorname{sat}(f_{\bm w})(0, x_2) = x_2^{P_2 - Q_2}.
\]
Therefore, $d \leq (P_2 - Q_2)/2$. Since $\Delta$ has slope $-w_1/w_2 > -1$ and $\Delta$ contains the point $(c, c)$, the line containing $\Delta$ contains the point $(0, \alpha)$ where $\alpha < 2c$. Therefore, $P_2 < 2c$. This proves that $d < c$.

Finally, let $n = 2$ and let $(c, c)$ be contained in the intersection of two facets. Let $(w_1, w_2) \in \mathbb Z_{>0}^2$ be any vector such that $f_{\bm w}$ is the monomial $x^c y^c$. Then $\operatorname{wt}_{\bm w}(f) / \sum w_i = c$. By \cref{thm:weighted formula power series}, since the pair $(\mathbb C^2, \frac{1}{c} V(x^c y^c))$ is log canonical, we find $\operatorname{lct}_{\bm 0}(f) = 1/c$.
\end{proof}

\begin{remark} \label{rem:algorithm}
\begin{enumerate}[label=(\alph*), ref=\alph*]
\item By \cref{thm:normalised Newton polyhedron}, \cref{thm:reading lct} gives an algorithm for computing the log canonical threshold for every non-zero non-unit power series $f \in \mathbb C\{x, y\}$ that defines an isolated singularity. The algorithm outputs positive integers $a, b$ such that the log canonical threshold is~$a/b$.
\item \label{itm:algorithm up to arbitrary precision} By \cref{thm:approximation,thm:normalised Newton polyhedron,thm:reading lct}, if $f$ is formally right equivalent to a power series $g \in \mathbb C[[x, y]]$ such that $g$ is normalised with respect to a facet of the Newton polyhedron that contains the point $(c, c)$, then the log canonical threshold of $f$ is~$1/c$. In particular, this gives an algorithm for computing the log canonical threshold for every non-zero power series $f \in \mathbb C\{x, y\}$ arbitrarily precisely, but it is not necessarily able to find two positive integers such that the log canonical threshold is their quotient.
\item The above in \labelcref{itm:algorithm up to arbitrary precision} is the best possible in the following sense: given two computable convergent power series (meaning where we can compute the terms up to arbitrarily high order), there exists no algorithm to compute two positive integers such that the log canonical threshold is their quotient. This is not surprising, since there is even no algorithm to determine whether a computable power series is equal to zero.
\item In case $f$ is a polynomial, it would be desirable to know a bound on the denominator of the log canonical threshold, perhaps depending only on the degree. By \labelcref{itm:algorithm up to arbitrary precision}, this would give an algorithm for computing the exact value of the log canonical threshold, as a quotient of two positive integers.
\end{enumerate}
\end{remark}

\begin{remark}
The number $c$ in \cref{thm:reading lct} is the minimum of the numbers $\operatorname{wt}_{\bm w'}(f) / \sum w_i'$ taken over all facets of the Newton polyhedron.

More precisely: let $\Delta'$ be any (not necessarily compact) facet of the Newton polyhedron of a non-zero power series $f \in \{x_1, \ldots, x_n\}$ satisfying $f(\bm 0) = 0$. Let $\bm w' = (w_1', \ldots, w_n') \in \mathbb Q_{\geq0}^n$ be any non-negative numbers such that $\bm w'$ is a non-zero normal vector to~$\Delta'$. Then, $\operatorname{wt}_{\bm w'}(f) / \sum w_i' = c'$, where $c' \in \mathbb Q_{\geq0}$ is the unique rational number such that the hyperplane containing $\Delta'$ contains the point $(c', \ldots, c')$. In the notation of \cref{thm:reading lct}, we have $c' \leq c$, with equality if and only if the ray from the origin through the point $(1, \ldots, 1)$ intersects~$\Delta'$.
\end{remark}

\subsection{Example --- sum and product of powers} \label{sec:examples}

As an application of \cref{thm:weighted formula power series}, we compute the log canonical thresholds of power series of the form $(\prod x_i^{a_i}) (\sum x_i^{b_i})$.

\begin{remark}
\Cref{thm:examples power series} is the corrected version of \cite[Example~8.17]{Kol97} and \cite[Proposition~2.2]{Kuw99}, where we have added that the log canonical threshold is at most~$1$. \Cref{exa:counter-example smooth,exa:counter-example Ak} show that this correction is indeed needed.
\end{remark}

\begin{proposition} \label{thm:examples power series}
Let $n \geq 2$. Let $f = (\prod x_i^{a_i}) (\sum x_i^{b_i})$, where $a_1, \ldots, a_n$ are non-negative integers and $b_1, \ldots, b_n$ are positive integers. Then,
\[
\operatorname{lct}_{\bm 0}(f) = \min\mleft(\mleft\{1, \frac{\sum 1/b_i}{1 + \sum a_i/b_i}\mright\} \cup \mleft\{ \frac{1}{a_j} \;\middle|\; j \in \{1, \ldots, n\},\, a_j > 0 \mright\}\mright).
\]
\end{proposition}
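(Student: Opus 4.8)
The plan is to read off the upper bound $\operatorname{lct}_{\bm 0}(f)\le m$ (where $m$ denotes the right-hand side of the asserted formula) from two applications of \cref{thm:weighted formula power series}, and then to obtain equality by induction on~$n$, splitting according to which term of the minimum is realised. First, since $\sum_i x_i^{b_i}$ is divisible by no~$x_i$, the divisor $V(f)$ has $V(\sum_i x_i^{b_i})$ as a reduced component passing through~$\bm 0$, so $\operatorname{lct}_{\bm 0}(f)\le 1$; likewise $V(x_j)$ is a component of multiplicity~$a_j$, so $\operatorname{lct}_{\bm 0}(f)\le 1/a_j$ whenever $a_j>0$. Applying \cref{thm:weighted formula power series} with the weights $\bm w=(1/b_1,\dots,1/b_n)$ — with respect to which $f$ is weighted homogeneous, so that $f_{\bm w}=f$, $\operatorname{wt}_{\bm w}(f)=1+\sum a_i/b_i$, and $\sum w_i=\sum 1/b_i$ — gives $\operatorname{lct}_{\bm 0}(f)\le(\sum 1/b_i)/(1+\sum a_i/b_i)$. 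Together these yield $\operatorname{lct}_{\bm 0}(f)\le m$.

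For the reverse inequality I induct on~$n$ (the case $n=1$, where $f=x_1^{a_1+b_1}$ and $m=1/(a_1+b_1)$, being immediate). Suppose first that $m=(\sum 1/b_i)/(1+\sum a_i/b_i)$; this forces $m\le 1$ and $m\le 1/a_j$ for every~$j$ with $a_j>0$. Use the second part of \cref{thm:weighted formula power series} with $\bm w=(1/b_1,\dots,1/b_n)$: here $C=\{\bm 0\}$ and $f_{\bm w}=f$, so it remains to check that $(\mathbb C^n,m\cdot V(f))$ is log canonical at every point $P\neq\bm 0$. Choosing an index $k$ with $x_k(P)\neq 0$, the partial derivative $\partial\bigl(\sum_i x_i^{b_i}\bigr)/\partial x_k$ is nonzero at~$P$, so $h:=\sum_i x_i^{b_i}$ may be used as a local coordinate in place of~$x_k$; in that coordinate system $V(f)$ equals, near~$P$, the simple normal crossings divisor $\bigcup_{i:\,x_i(P)=0,\,a_i>0}V(x_i)\cup V(h)$, whose coefficients $a_i$ and~$1$ are all at most~$1/m$. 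Hence the pair is log canonical near~$P$, and $\operatorname{lct}_{\bm 0}(f)=m$.

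Next suppose $m=1/a_{j_0}$ for some $j_0$ with $a_{j_0}>0$. Apply \cref{thm:weighted formula power series} with $\bm w=e_{j_0}$ (using $n\ge 2$): then $f_{\bm w}=x_{j_0}^{a_{j_0}}\cdot g$ with $g=\bigl(\prod_{i\neq j_0}x_i^{a_i}\bigr)\bigl(\sum_{i\neq j_0}x_i^{b_i}\bigr)$, and $C=V(x_{j_0})$. Away from $V(x_{j_0})$ the pair $(\mathbb C^n,\tfrac1{a_{j_0}}V(f_{\bm w}))$ is the product of the $x_{j_0}$-line with $(\mathbb C^{n-1},\tfrac1{a_{j_0}}V(g))$, so — using that $g$ is weighted homogeneous, hence attains its smallest log canonical threshold at the origin — it suffices that $\operatorname{lct}_{\bm 0}(\mathbb C^{n-1},V(g))\ge 1/a_{j_0}$. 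By the inductive hypothesis this equals the value of the formula for~$g$, and an elementary manipulation (subtracting the $j_0$-contribution from $1/a_{j_0}\le(\sum_i 1/b_i)/(1+\sum_i a_i/b_i)$, and using $1/a_{j_0}=m\le 1/a_i$ for $i\neq j_0$) shows $1/a_{j_0}$ is at most each term constituting that value. Hence $\operatorname{lct}_{\bm 0}(f)=1/a_{j_0}$.

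The only remaining case is $m=1$ with all $a_j=0$, i.e. $f=\sum_i x_i^{b_i}$ and $\sum 1/b_i\ge 1$: if some $b_i=1$ then $V(f)$ is smooth at~$\bm 0$, and otherwise $\operatorname{lct}_{\bm 0}(f)=\min(1,\sum 1/b_i)=1$ by the classical value of the log canonical threshold of a Brieskorn--Pham singularity (item~(5) of the introduction, the complex singular index of $\sum_i x_i^{b_i}$ being $\sum 1/b_i$); combined with $\operatorname{lct}_{\bm 0}(f)\le 1$ this gives $\operatorname{lct}_{\bm 0}(f)=1$. The main obstacle is the equality assertion, concentrated in the two places where \cref{thm:weighted formula power series} is not self-sufficient: checking that away from the coordinate locus the only component of $V(f)$ that can be singular, namely $V(\sum_i x_i^{b_i})$, is in fact smooth and meets the coordinate hyperplanes transversally (so the pair is snc there), and the Brieskorn--Pham case with $\sum 1/b_i\ge 1$, which relies on an external classical input. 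One also has to confirm the three cases, with their overlaps, are exhaustive; this follows from the fact that $m\le 1$ always, so whenever the ratio $(\sum 1/b_i)/(1+\sum a_i/b_i)$ realises~$m$ the first case applies.
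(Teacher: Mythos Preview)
Your argument is correct and follows the paper's strategy closely: the upper bound and the case $m=(\sum 1/b_i)/(1+\sum a_i/b_i)$ are handled essentially identically (local snc check at points with a nonzero coordinate), and your case $m=1/a_{j_0}$ is the paper's inductive step via the weight $e_{j_0}$, with the minor streamlining that you take $n=1$ as the induction base and thereby absorb the $n=2$ case, which the paper treats separately.

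The only genuine divergence is the residual case $m=1$ with all $a_j=0$ and $\sum 1/b_i>1$. The paper disposes of this by the device ``if $M>1$, replace $a_n$ by~$1$'', so that $1/a_n'=1$ enters the minimum, $M'\le 1$, and one falls back on the other cases --- keeping the whole proof inside \cref{thm:weighted formula power series}. You instead invoke the classical Brieskorn--Pham value $\operatorname{lct}_{\bm 0}(\sum x_i^{b_i})=\min(1,\sum 1/b_i)$ via item~(5) of the introduction together with the (standard but unproved here) identity $\beta_{\mathbb C}(\sum x_i^{b_i})=\sum 1/b_i$. Your route is valid and, as you yourself flag, trades self-containment for an external input; the paper's reduction avoids importing that fact at the cost of an extra manoeuvre.
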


\begin{proof}
Denote
\[
M := \min\mleft(\mleft\{\frac{\sum 1/b_i}{1 + \sum a_i/b_i}\mright\} \cup \mleft\{ \frac{1}{a_j} \;\middle|\; j \in \{1, \ldots, n\},\, a_j > 0 \mright\}\mright).
\]

If $M > 1$, then replace $a_n$ by~$1$.

If $M = (\sum 1/b_i)/(1 + \sum a_i/b_i)$, then by \cref{thm:weighted formula power series}, it suffices to show that $(\mathbb C^n, M \cdot V(f))$ is log canonical outside the origin. For every point $P \in \mathbb C^n$, if the $j$-th coordinate $P_j$ of $P$ is non-zero, then after a suitable biholomorphism locally around~$P$, $V(f)$ becomes either $D_1 := V((x_j - P_j) \cdot \prod_{i \neq j} x_i^{a_i})$ or $D_2 := V(\prod_{i \neq j} x_i^{a_i})$. Since the support of both $D_1$ and $D_2$ is snc and since $M \leq 1/a_i$ for all~$i$, we find that $(\mathbb C^n, M \cdot V(f))$ is log canonical at~$P$.

If $M = 1/a_1$ and $n = 2$, then $a_1 \leq a_2 + b_2$.
Therefore, $(\mathbb C^2, M \cdot V(x_1^{a_1} x_2^{a_2 + b_2}))$ is log canonical outside $V(x_1)$. By \cref{thm:weighted formula power series}, $\operatorname{lct}_{\bm 0}(f) = M$. The case where $M = 1/a_2$ and $n = 2$ is similar.

Lastly, if $M = 1/a_j$ and $n \geq 3$, then let $\bm w$ be the non-negative weights where $w_i > 0$ if and only if $i = j$. We have $f_{\bm w} = (\sum_i x_i^{a_i}) (\sum_{i \neq j} x_i^{b_i})$. By \cref{thm:weighted formula power series}, it suffices to show that the pair $(\mathbb C^n, M \cdot V(f_{\bm w}))$ is log canonical outside $V(x_j)$. For this, it suffices to show that the pair
\[
\Bigl(\mathbb C^{n-1},\, M \cdot V\bigl( \bigl(\prod_{i \neq j} x_i^{a_i}\bigr) \bigl(\sum_{i \neq j} x_i^{b_i}\bigr) \bigr)\Bigr)
\]
is log canonical, where both the product and the sum is over $i \in \{1, \ldots, n\} \setminus \{j\}$. The equality $M = 1/a_j$ implies that
\[
M \leq \frac{\sum_{j\neq i} 1/b_i}{c + \sum_{j\neq i} a_i/b_i}.
\]
Log canonicity is now proved by induction over~$n$.
\end{proof}

\newcommand{\etalchar}[1]{$^{#1}$}
\providecommand{\bysame}{\leavevmode\hbox to3em{\hrulefill}\thinspace}
\providecommand{\MR}{\relax\ifhmode\unskip\space\fi MR }
\providecommand{\MRhref}[2]{%
  \href{http://www.ams.org/mathscinet-getitem?mr=#1}{#2}
}
\providecommand{\href}[2]{#2}

\ShowAffiliations{\vspace{0.5\baselineskip},\\[1\baselineskip]}

\end{document}